\theoremstyle{plain}
\newtheorem{theorem}{Theorem}[section]
\newtheorem{proposition}[theorem]{Proposition}
\newtheorem{lemma}[theorem]{Lemma}
\newtheorem{corollary}[theorem]{Corollary}
\theoremstyle{definition}
\newtheorem{definition}{Definition}
\newtheorem{example}{Example}
\newtheorem{problem}{Problem}
\theoremstyle{remark}
\newtheorem*{remark}{Remark}
\def\cocoa{{\hbox{\rm C\kern-.13em o\kern-.07em C\kern-.13em o\kern-.15em A}}}
\def\C{\mathcal{C}}
\def\F{\mathcal{F}}
\def\D{\mathcal{D}}
\def\U{\mathcal{U}}
\renewcommand{\S}{\mathcal{S}}
\def\KK{\mathbb{K}}
\def\NN{\mathbb{N}}
\def\link{\mathrm{link}}
\newcommand{\gen}[1]{\left\langle #1\right\rangle}
\renewcommand{\H}{\mathcal{H}}
\newcommand{\I}{\mathcal{I}}
\newcommand{\Size}{\mathrm{Size}}
\newcommand{\wvd}{\mathrm{wd}}
\newcommand{\DS}{\mathrm{DS}}
\newcommand{\set}[1]{\left\{#1\right\}}
\begin{document}
\title[Extending simplicial complexes]{Extending simplicial complexes: Topological and combinatorial properties}

\author[M. Farrokhi D. G.]{Mohammad Farrokhi D. G.}
\email{m.farrokhi.d.g@gmail.com,\ farrokhi@iasbs.ac.ir}
\address{ Research Center for Basic Sciences and Modern Technologies (RBST), Institute for Advanced Studies in Basic Sciences (IASBS), 
Zanjan 45137-66731, Iran}
\author[A. Shamsian]{Alireza Shamsian}
\email{ashamsian@iasbs.ac.ir}
\author[A. A. Yazdan Pour]{{Ali Akbar} {Yazdan Pour}}
\email{yazdan@iasbs.ac.ir}
\address{Department of Mathematics, Institute for Advanced Studies in Basic Sciences (IASBS), Zanjan 45137-66731, Iran}
			
\subjclass[2010]{Primary 13F55, 05E45; Secondary 05C65}
\keywords{Sequentially Cohen-Macaulay, shellable, vertex-decomposable, pure, simplicial complex, hypergraph}
	
\begin{abstract}
Given an arbitrary hypergraph $\H$, we may glue to $\H$ a family of hypergraphs to get a new hypergraph $\H'$ having $\H$ as an induced subhypergraph. In this paper, we introduce three gluing techniques for which the topological and combinatorial properties (such as Cohen-Macaulayness, shellability, vertex-decomposability etc.) of the resulting hypergraph $\H'$ is under control in terms of the glued components. This enables us to construct broad classes of simplicial complexes containing a given simplicial complex as induced subcomplex satisfying nice topological and combinatorial properties. Our results will be accompanied with some interesting open problems.
\end{abstract}

\maketitle
%==================================================
\section*{introduction}
A \textit{simplicial complex} $\Delta$ on a vertex set $V$ is a collection of subsets of $V$ such that $\cup \Delta =V$ and $\Delta$ is closed under the operation of taking subsets. The elements of $\Delta$ are called \textit{faces} and the maximal faces of $\Delta$, under inclusion, are called the \textit{facets} of $\Delta$. A simplicial complex with facets $F_1, \ldots, F_m$ is often denoted by $\langle F_1, \ldots, F_m\rangle$. A \textit{simplex} is a simplicial complex with only one facet.

A simplicial complex $\Delta$ is called \textit{shellable} if there is a total order on facets of $\Delta$, say $F_1, \ldots, F_m$, such that $\langle F_1, \ldots, F_{i-1} \rangle \cap \langle F_i \rangle$ is generated by a non-empty set of maximal proper subsets of $F_i$ for $2 \leq i \leq m$. The notion of shellability is used to give (an inductive) proof for the Euler-Poincar\'e formula in any dimension. If $f_i$ denotes the number of $i$-faces of a $d$-dimensional polytope (with $f_{-1} = f_d = 1$), then the Euler-Poincar\'e formula states that $\sum_{i=-1}^d (-1)^if_i=1$. Shellable complexes are themselves an intermediate family among two other important families of simplicial complexes, namely vertex-decomposable and sequentially Cohen-Macaulay simplicial complexes. Indeed, we have the following implications
\begin{center}
vertex-decomposable $\Longrightarrow$ shellable $\Longrightarrow$ sequentially Cohen-Macaulay,
\end{center}
and both of these inclusions are known to be strict.

A vertex-decomposable simplicial complex $\Delta$ is defined recursively in terms of link and deletion of vertices of $\Delta$. In a more general setting, the \textit{link} and the \textit{deletion} of a face $F$ of $\Delta$ are defined as follows:
\begin{align*}
\link_{\Delta}(F) &= \{ G \in \Delta \colon \; G \cap F= \varnothing \text{ and } G \cup F \in \Delta\}, \\
\Delta \setminus F &= \{G \in \Delta \colon  \; G \cap F= \varnothing\}.
\end{align*} 
In view of the above settings, $\Delta$ is \textit{vertex-decomposable} if either it is a simplex or else there exists a vertex $v\in V$ such that 
\begin{itemize}
\item[(i)] any facet of $\Delta \setminus v$ is a facet of $\Delta$;
\item[(ii)] both complexes $\link_\Delta(v)$ and $\Delta \setminus v$ are vertex-decomposable. 
\end{itemize}

Sequentially Cohen-Macaulay complexes are defined slightly different. Let $\Delta$ be a simplicial complex on $[n]$, where $[n]=\{1,\ldots,n\}$. The \textit{pure $i$-skeleton} of $\Delta$ is the simplicial complex $\Delta^{[i]}= \langle F \in \Delta \colon \; |F|=i + 1 \rangle$. A simplicial complex $\Delta$ is \textit{Cohen-Macaulay} over $\KK$ if the \textit{Stanley-Reisner ring} $\KK[\Delta]:=S/I_\Delta$ is a Cohen-Macaulay ring, where $S=\KK[x_1,\ldots,x_n]$ is the polynomial ring with coefficients in $\KK$ and $I_\Delta=\gen{\prod_{i\in F}x_i\colon\ F\notin\Delta}$. It turns out that $\Delta$ is Cohen-Macaulay if and only if $\tilde{H}_i(\link_{\Delta}(F), \mathbb{K})=0$, for all $F \in \Delta$ and $i<\dim \link_{\Delta}(F)$ (Reisner's Theorem, see e.g. \cite[Corollary 4.2]{rs}). Consequently, as stated in \cite[Proposition 4.3]{rs}, Cohen-Macaulayness is a topological property in the sense that $\Delta$ is Cohen-Macaulay if and only if the relative singular homologies $H_i(\|\Delta\|, \|\Delta\| - p, \KK)$ of the geometric realization $\|\Delta\|$ of $\Delta$ vanish for all $i<\dim\|\Delta\|$ and $p\in\|\Delta\|$. Note that Cohen-Macaulay complexes are \textit{pure} in the sense that all of their facets have the same cardinality (see \cite[Corollary 5.1.5]{wb-jh}). Accordingly, $\Delta$ is \textit{sequentially Cohen-Macaulay} if every pure $i$-skeleton of $\Delta$ is Cohen-Macaulay, which is equivalent to say that $\KK[\Delta]$ is a sequentially Cohen-Macaulay ring (see \cite[Theorem 3.3]{amd}). Recall that a (graded) $S$-module $M$ is \textit{sequentially Cohen-Macaulay} if there exists a filtration
\begin{equation} \label{filtration}
0 = M_0 \subset M_1 \subset \cdots \subset M_r = M
\end{equation}
of (graded) submodules of $M$ such that each quotient $M_i/M_{i-1}$ is Cohen-Macaulay and
\[\dim M_1/M_0 < \dim M_2/M_1 < \cdots < \dim M_r/M_{r-1},\]
where $\dim N$ denotes the Krull dimension of $S$-module $N$.

A \textit{hypergraph} $\H$ is simply a pair $(V, E)$ of vertices $V$ and edges $E\subseteq2^V$. The \textit{independence complex} $\Delta_{\H}$ of $\H$ is the simplicial complex of all independent sets in $\H$. Clearly, every simplicial complex is the independence complex of a hypergraph. One say that $\H$ is (sequentially) Cohen-Macaulay/shellable/vertex-decomposable/pure if $\Delta_\H$ is so. In this paper, we consider a hypergraph $\H'$ obtained by gluing some hypergraphs to a central ``arbitrary'' hypergraph $\H$ and study the topological and combinatorial properties (such as Cohen-Macaulayness, shellability, vertex-decomposability etc.) of $\H'$. In this regard, Villarreal \cite[Proposition 2.2]{rhv-1} proves that the graph obtained from a graph $G$ by adding a pendant (also known as whisker) to each vertex is Cohen-Macaulay. Next Villarreal \cite[Proposition 5.4.10]{rhv-2} improves his result by showing that such graphs are pure and shellable. Later Dochtermann and Engstr\"{o}m \cite[Theorem 4.4]{ad-ae} prove that such graphs are indeed pure and vertex-decomposable. Replacing pendants with complete graphs in the Villarreal's construction, Hibi et al. \cite[Theorem 1.1]{th-ah-kk-abo} show that the resulting graph is still pure and vertex-decomposable (see also \cite{dc-un}). The idea of making small modifications to a graph in order to obtain a (sequentially) Cohen-Macaulay/shellable/vertex-decomposable graph is further explored in other papers too (see \cite{jb-avt, ad-ae, caf-hth, am-sasf-sy, mrp-sasf-sy}).

This paper is organized as follows: In the first section, we quickly review some algebraic and combinatorial backgrounds, which will be used in the sequel. Hybrid hypergraphs are introduced in Section \ref{First construction}. These hypergraphs are constructed by gluing a family of hypergraphs to a central one via a family of triples, which are assumed to satisfy the proper independence property (see the definition of PIP-triples). The main theorem of this section establishes some combinatorial/topological properties of hybrid hypergraphs and determines under which conditions a hybrid hypergraph is sequentially Cohen-Macaulay/shellable/vertex-decomposable (Theorem \ref{H'=(H,(U_i,D_i,H_i))}). It is shown that all of the results mentioned above are consequences of our main theorem of Section \ref{First construction} (see Example \ref{examples to first construction}). The idea behind the proof of Theorem \ref{H'=(H,(U_i,D_i,H_i))} is very flexible and can be applied to other suitably constructed families of hypergraphs. In Section \ref{Second and third constructions}, we present two such families of hypergraph constructions and show that the results of Theorem \ref{H'=(H,(U_i,D_i,H_i))} can be extended to these families too (Theorems \ref{(C, (C_u)_(u in U))} and \ref{(C, (C_u)_(u in U))^*}). While the gluing methods here are less general than the hybrid case, the glued components need not to satisfy the proper independence property. In the last section, we conclude the paper by proposing some research problems endowed with their motivations, which arise from our arguments in previous sections. The first four problems concern to assumptions and results of Theorem \ref{H'=(H,(U_i,D_i,H_i))} including the PIP-condition requirement of the theorem and some consequences of its part (iii), which may be of independent interest. The fifth problem, having more algebraic flavor, simply asks to study other algebraic properties and invariants of the three hypergraph constructions presented in this paper. In this regard, it is reasonable to ask whether one may control the algebraic invariants (such as Hilbert function, depth, regularity etc.) of the resulting hypergraphs in terms of the glued components. 

%==================================================
\section{Preliminaries} \label{prelim}
In this section, we recall basic notions of simplicial complexes, hypergraphs, and their associated ideals, which we meet in this paper.

\subsection{Simplicial complexes}
Let $\Delta$ be a simplicial complex on a vertex set $V=\{v_1, \ldots, v_n\}$. Recall that $\Delta$ is a collection of subsets of $V$, called faces of $\Delta$, such that 
\begin{itemize}
\item[(1)]$\{ v_i \} \in \Delta$ for all $1\leq i\leq n$,
\item[(2)]if $F \in \Delta$ and $G\subseteq F$, then $G\in\Delta$.
\end{itemize}
We denote by $\mathcal{F}(\Delta)$ the set of facets of $\Delta$, namely the maximal faces of $\Delta$ under inclusion. The \textit{dimension} of a face $F$, denoted by $\dim F$, is $|F| - 1$, where $|F|$ is the cardinality of $F$. Accordingly, the \textit{dimension} of $\Delta$ is defined as
$$\dim \Delta = \max\{\dim F \colon\quad F \in \Delta \}.$$
A simplicial complex is \textit{pure} if all of its facets have the same dimension. A face $F$ of $\Delta$ is \textit{maximal} with respect to a subset $W$ of $V$, or simply $F$ is \textit{$W$-maximal}, if $F\cup\{w\}\notin\Delta$ for any $w\in W\setminus F$.

Let $X$ and $Y$ be disjoint sets, and $\mathcal{X}\subseteq 2^X$ and $\mathcal{Y}\subseteq 2^Y$. The \textit{join} $\mathcal{X}\star\mathcal{Y}$ of $\mathcal{X}$ and $\mathcal{Y}$ is the subset $\{A\cup B\colon\ A\in\mathcal{X}\text{ and }B\in\mathcal{Y}\}$ of $2^{X\cup Y}$. In the case that $\mathcal{X}$ and $\mathcal{Y}$ are simplicial complexes, $\mathcal{X}\star\mathcal{Y}$ is a simplicial complex as well.

A vertex $v \in V$ is a \textit{shedding vertex} of $\Delta$ if any facet of $\Delta \setminus v$ is a facet of $\Delta$. It follows that $\Delta$ is vertex-decomposable if either it is a simplex or else it has a shedding vertex $v$ such that both $\link_\Delta(v)$ and $\Delta \setminus v$ are vertex-decomposable. Vertex-decomposability of simplicial complexes was first introduced in the pure case by Provan and Billera \cite{jsp-ljb} and then extended to arbitrary complexes by Bj\"orner and Wachs \cite[Section 11]{ab-mlw-1}.

For any non-empty set $F \subseteq \{v_{1}, \ldots, v_{n} \}$, let $\textbf{x}_F=\prod_{v_i \in F}{x_i}$, and put $\textbf{x}_\varnothing = 0$. The \textit{Stanley-Reisner ideal} of $\Delta$, denoted by $I_\Delta$, is an ideal of $\KK[x_1,\ldots,x_n]$ generated by square-free monomials $\textbf{x}_F$, where $F \not\in \Delta$.

\subsection{Hypergraphs, clutters, and their associated ideals}
Let $\H$ be a hypergraph with vertex set $V=V(\H)$ and edge set $E=E(\H)$. Following \cite{md-dp-jp}, there are two ways to remove a vertex $v$ from $\H$. The \textit{strong vertex deletion} $\H \setminus v$ is the hypergraph with vertex set $V(\H)\setminus\{v\}$ and edge set $\{e \in E(\H) \colon \; v \notin e\}$. The \textit{weak vertex deletion} $\H/v$ has the same vertex set as $\H\setminus v$ but the edge set is $\{ e \setminus \{v\} \colon \; e \in E(\H) \}$. One observe that $\H \setminus v$ deletes all edges containing $v$, while $\H/v$ removes $v$ from each edge containing it. It is straightforward to see that, if $v \neq w$ are vertices of $\H$, then:
\[(\H \setminus v) \setminus w = (\H \setminus w) \setminus v, \quad (\H/v)/w = (\H/w)/v, \quad (\H \setminus v)/ w = (\H/w) \setminus v.\]
Let $W$ be a set of vertices of $\H$ and $f\in\{0,1\}^W$ be a binary function on $W$. A \textit{$(W, f)$-deletion} of $\H$ is a hypergraph obtained from $\H$ by repeatedly strongly  deleting all vertices $w$ of $W$ with $f(w)=0$ and weakly deleting all vertices $w$ of $W$ with $f(w)=1$. The number of weak vertex deletions in a $(W,f)$-deletion $\H'$ of $\H$ is denoted by $\wvd_f(\H')$ that is $\wvd_f(\H')=\sum_{w\in W}f(w)$.

If $W$ is a set of vertices of $\H$, then the subhypergraph $\H[W]$ of $\H$ \textit{induced} on $W$ is the subhypergraph of $\H$ with vertex set $W$ and edge set $\{e\in E(\H)\colon\ e\subseteq W\}$.

For a non-empty hypergraph $\H$ on vertex set $[n]$, we define the ideal $I \left( \H \right)$ to be
$$I(\H) = \left(  \textbf{x}_T \colon \quad T \in E(\H) \right),$$
and we  set $I(\varnothing) = 0$. The ideal $I(\H)$ is called the \textit{edge ideal} of $\H$. 
Let $\Delta_{\H}$ be the simplicial complex on the vertex set $[n]$ with $I_{\Delta_{\H}} = I \left( {\H} \right)$. The simplicial complex $\Delta_{\H}$ is called the \textit{independence complex} of $\H$. Notice that $F\subseteq[n]$ belongs to $\Delta_{\H}$ if and only if it is an \textit{independent set} in $\H$, that is $e\nsubseteq F$ for every $e\in E(\H)$. The \textit{independence number} $\alpha(\H)$ of $\H$ is the maximum size of independent sets of $\H$ or equivalently $\dim\Delta_\H+1$.

A \textit{clutter} $\mathcal{C}$ with vertex set $X$ is an antichain of $2^X$ such that $X=\cup\C$. The elements of $\C$ are called \textit{circuits} of $\mathcal{C}$. A clutter $\C$ is \textit{$d$-uniform} if every circuit of $\C$ has $d$ vertices. As a hypergraph, to every clutter $\C$ one corresponds its ideal $I(\C)$. This correspondence is clearly bijective, the fact that is not valid for hypergraphs in general.

If $\mathcal{C}$ is a $d$-uniform clutter on $[n]$, then we define the \textit{complement} $\bar{\mathcal{C}}$ of $\mathcal{C}$ as
\begin{equation*}
\bar{\mathcal{C}} = \{F \subseteq [n] \colon \quad |F|=d, \,F \notin \mathcal{C}\}.
\end{equation*}

In this case, the simplicial complex $\Delta (\C)$ on the vertex set $[n]$ with $I_{\Delta (\C)} = I \left( \bar{\C} \right)$ is called the \textit{clique complex} of $\C$. A face $F \in \Delta (\C)$ is called a \textit{clique} in $\C$. It is easily seen that $F \subseteq [n]$ is a clique in $\C$ if and only if either $|F|<d$ or else all $d$-subsets of $F$ belong to $\C$.

\subsection{Criteria for (sequentially) Cohen-Macaulayness and shellability}
The recursive definition of vertex-decomposability states that a simplicial complex $\Delta$ is vertex-decomposable if $\Delta$ admits a shedding vertex $v$ such that $\link_\Delta(v)$ and $\Delta\setminus v$ are both vertex-decomposable provided that $\Delta$ is not a simplex. In the following, among other results, we show that analogous arguments work for (sequentially) Cohen-Macaulayness and shellability as well. Indeed, in the proof of our main theorems, we do not use the formal definitions of shellable or (sequentially) Cohen-Macaulay complexes as it is introduced in the introduction. Instead, the following theorem plays a crucial role in our arguments.
%--------------------------------------------------
\begin{theorem}\label{Delta, link, and Delta * Delta'}
Let $\Delta$ and $\Delta'$ be simplicial complexes.
\begin{itemize}
\item[{\rm (i)}]If $\Delta$ is (sequentially) Cohen-Macaulay/shellable/vertex-decomposable, then so is $\link_\Delta(F)$, for every face $F$ of $\Delta$.
\item[{\rm (ii)}]If $\Delta$ has a shedding vertex $v$ such that both $\link_\Delta(v)$ and $\Delta\setminus v$ are (sequentially) Cohen-Macaulay/shellable, then so is $\Delta$.
\item[{\rm (iii)}]$\Delta\star\Delta'$ is (sequentially) Cohen-Macaulay/shellable/vertex-decomposable if and only if both $\Delta$ and $\Delta'$ are so.
\end{itemize}
\end{theorem}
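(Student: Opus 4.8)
The plan is to reduce all three parts to a handful of standard facts — about links and deletions, shedding vertices, short exact sequences of Stanley--Reisner rings, and joins — handling the flavors ``sequentially Cohen--Macaulay'', ``shellable'' and ``vertex-decomposable'' by parallel but separate arguments; several of the ingredients are classical, and I would cite them rather than reprove them. For part (i), the identity $\link_\Delta(F)=\link_{\link_\Delta(v)}(F\setminus\{v\})$ for $v\in F$ lets me induct on $|F|$ and reduce to a single vertex $v$. For shellability I would take a shelling $F_1,\dots,F_m$ of $\Delta$ and check that deleting $v$ from those facets containing it, in their induced order, shells $\link_\Delta(v)$ (whose facets are exactly the $G\setminus v$ with $G\in\mathcal{F}(\Delta)$ and $v\in G$): the relevant ``new part'' of the $t$-th such facet works out to be generated by the sets $(F_{i_t}\setminus v)\setminus u$ over those $u\in F_{i_t}$, $u\neq v$, for which $F_{i_t}\setminus u$ lies in an earlier facet, and this collection is nonempty because otherwise $F_{i_1}\cap F_{i_t}$, for an earlier facet $F_{i_1}$ through $v$, would be forced into $\gen{F_{i_t}\setminus v}$, contradicting $v\in F_{i_1}\cap F_{i_t}$. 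For (sequentially) Cohen--Macaulayness I would combine Reisner's criterion \cite{rs} with the identity $(\link_\Delta(v))^{[i]}=\link_{\Delta^{[i+1]}}(v)$ to reduce the sequential case to the Cohen--Macaulay one, and the latter is then immediate from Reisner's criterion, since every link of $\link_\Delta(v)$ is a link of $\Delta$ of the same dimension (equivalently, $\KK[\Delta]_{x_v}\cong\KK[\link_\Delta(v)][x_v^{\pm1}]$ exhibits $\KK[\link_\Delta(v)]$ via a localization of a Cohen--Macaulay ring). For vertex-decomposability I would simply invoke the classical theorem of Provan--Billera and Bj\"orner--Wachs \cite{jsp-ljb,ab-mlw-1} that links of vertex-decomposable complexes are vertex-decomposable.

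For part (ii), the shellable case is the standard concatenation. Since $v$ is a shedding vertex, $\mathcal{F}(\Delta)=\mathcal{F}(\Delta\setminus v)\sqcup\{H\cup v:H\in\mathcal{F}(\link_\Delta(v))\}$, and I would verify that a shelling of $\Delta\setminus v$ followed by $H_1\cup v,\dots,H_q\cup v$, for a shelling $H_1,\dots,H_q$ of $\link_\Delta(v)$, is a shelling of $\Delta$; the one point to check, the shelling condition for each $H_j\cup v$, uses that $H_j$ lies in a facet of $\Delta\setminus v$. For the (sequentially) Cohen--Macaulay case I would use the short exact sequence
\[
0\longrightarrow\KK[\gen{v}\star\link_\Delta(v)](-1)\xrightarrow{\,x_v\,}\KK[\Delta]\longrightarrow\KK[\Delta\setminus v]\longrightarrow 0.
\]
When $\link_\Delta(v)$ and $\Delta\setminus v$ are Cohen--Macaulay of dimensions $\dim\Delta-1$ and $\dim\Delta$ — which is automatic once we restrict to pure skeleta — the two outer modules are Cohen--Macaulay of dimension $\dim\Delta+1$, so the depth lemma forces $\KK[\Delta]$ to be Cohen--Macaulay. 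For the sequential statement one passes to pure skeleta, where the shedding hypothesis gives $\Delta^{[i]}\setminus v=(\Delta\setminus v)^{[i]}$, $\link_{\Delta^{[i]}}(v)=(\link_\Delta(v))^{[i-1]}$, and that $v$ remains a shedding vertex of $\Delta^{[i]}$, so the Cohen--Macaulay case applies to each $\Delta^{[i]}$ (with the degenerate case, where $\Delta^{[i]}$ is a cone over $(\link_\Delta(v))^{[i-1]}$, handled directly).

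For part (iii), the ``only if'' direction is immediate from part (i): if $G'$ is a facet of $\Delta'$ then $\link_{\Delta\star\Delta'}(G')=\Delta$, and symmetrically, so each factor inherits any property of $\Delta\star\Delta'$. For ``if'', the shellable case is a direct concatenation — ordering the facets $F_i\cup G_j$ lexicographically in $(i,j)$, for shellings $F_\bullet$ of $\Delta$ and $G_\bullet$ of $\Delta'$, yields a shelling of $\Delta\star\Delta'$. The Cohen--Macaulay case follows from Reisner's criterion via the K\"unneth formula $\widetilde{H}_k(\Delta\star\Delta';\KK)\cong\bigoplus_{p+q=k-1}\widetilde{H}_p(\Delta;\KK)\otimes_\KK\widetilde{H}_q(\Delta';\KK)$, together with $\dim(\Delta\star\Delta')=\dim\Delta+\dim\Delta'+1$, $\link_{\Delta\star\Delta'}(F\cup F')=\link_\Delta(F)\star\link_{\Delta'}(F')$, and the observation that $\Delta\star\Delta'$ is pure iff both factors are. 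For the sequentially Cohen--Macaulay case I would use $\KK[\Delta\star\Delta']\cong\KK[\Delta]\otimes_\KK\KK[\Delta']$ and assemble a sequentially Cohen--Macaulay filtration of the tensor product from those of the two factors, ordering the pairwise tensor products of their terms by total dimension and using that a $\KK$-tensor product of Cohen--Macaulay modules is Cohen--Macaulay. Finally, vertex-decomposability of $\Delta\star\Delta'$ I would get by induction on $|V(\Delta)|+|V(\Delta')|$: if some factor, say $\Delta'$, is not a simplex, it has a shedding vertex $u$ with $\link_{\Delta'}(u)$ and $\Delta'\setminus u$ vertex-decomposable, and then $u$ is a shedding vertex of $\Delta\star\Delta'$ with $\link_{\Delta\star\Delta'}(u)=\Delta\star\link_{\Delta'}(u)$ and $(\Delta\star\Delta')\setminus u=\Delta\star(\Delta'\setminus u)$ vertex-decomposable by induction, the base case (both factors simplices) being trivial.

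I expect the genuine obstacle to be the vertex-decomposability half of part (i): a shedding vertex of $\Delta$ need not remain a shedding vertex of $\link_\Delta(v)$, so the statement that links of vertex-decomposable complexes are vertex-decomposable is genuinely delicate, and I would rely on the Bj\"orner--Wachs / Provan--Billera result rather than reprove it. Everything else is bookkeeping: the nonemptiness check in the restricted shelling of (i), the compatibility of the skeleton identities in (ii) with the shedding hypothesis so that the depth-lemma argument applies to every $\Delta^{[i]}$, and keeping track of the degree and index shifts in the K\"unneth formulas used in (iii).
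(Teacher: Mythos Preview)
Your proposal is correct. The paper's own proof of this theorem is almost entirely by citation: part (i) is attributed to Reisner \cite{rs}, Provan--Billera \cite{jsp-ljb}, and Woodroofe \cite{rw2011} (with only the sequentially Cohen--Macaulay case argued directly via the skeleton identity, exactly as you do); part (ii) is deferred to \cite{rj-aayp} and \cite{mlw}; and part (iii) is deferred to \cite{ab-mw-vw}, \cite{wb-jh}, \cite{ab-mlw-2}, and \cite{rw2011}. What you have done instead is supply the actual arguments that sit behind those citations --- the restricted-shelling argument for links, the short exact sequence plus depth lemma for (ii), the lexicographic shelling and K\"unneth/tensor-filtration arguments for (iii) --- while, like the paper, citing out the one genuinely delicate point (links of vertex-decomposable complexes). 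So the mathematical route is the same; your write-up is simply more self-contained, at the cost of length. One incidental remark: your index shift $(\link_\Delta(v))^{[i]}=\link_{\Delta^{[i+1]}}(v)$ is the correct one, whereas the paper's stated identity $(\link_\Delta(F))^{[i]}=\link_{\Delta^{[i]}}(F)$ omits the shift by $|F|$.
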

\begin{proof}
(i) If $\Delta$ is Cohen-Macaulay (resp. shellable or vertex-decomposable), then by \cite[Corollary 4.2]{rs} (resp. \cite[Proposition 2.3]{jsp-ljb} or \cite[Proposition 3.7]{rw2011}), $\link_\Delta(F)$ is also Cohen-Macaulay (resp. shellable or vertex-decomposable) for every face $F$ of $\Delta$. Now, suppose $\Delta$ is sequentially Cohen-Macaulay. It is easy to check that $(\link_\Delta(F))^{[i]}=\link_{\Delta^{[i]}}(F)$ for every face $F$ of $\Delta$ and $i\geq0$. It follows that all pure skeletons of $\link_\Delta(F)$ are Cohen-Macaulay so that $\link_\Delta(F)$ is sequentially Cohen-Macaulay as well.

(ii) The result follows from \cite[Theorem 1.3]{rj-aayp} and \cite[Lemma 6]{mlw}.

(iii) The result follows from \cite[Corollary 3.3]{ab-mw-vw}, \cite[Exercise 5.1.21]{wb-jh}, \cite[Remark 10.22]{ab-mlw-2}, and \cite[Proposition 3.8]{rw2011}.
\end{proof}

In the rest of paper, we introduce three hypergraph constructions by gluing a family of hypergraphs to a given central hypergraph and examine when the resulting hypergraphs satisfy our desired properties, namely (sequentially) Cohen-Macaulayness, shellability, and vertex-decomposability.
%==================================================
\section{First construction} \label{First construction}
In this section we introduce our first (and main) hypergraph gluing. Under mild assumptions, i.e. the PIP-condition, we may control the topological and combinatorial properties of the resulting
hypergraphs in terms of the glued components.
%--------------------------------------------------
\begin{definition}
Let $\H$ be a hypergraph with vertex partition $U_1\dot\cup\cdots\dot\cup U_m\dot\cup V$, and $\H_1,\ldots,\H_m$ be hypergraphs such that $\H,\H_1,\ldots,\H_m$ are pairwise disjoint. Let $ D_1,\ldots, D_m$ be sets of non-negative integers. The hypergraph with vertex set $V(\H) \cup V(\H_1) \cup \cdots \cup V(\H_m)$ and edge set
\begin{equation}
E(\H) \cup \bigcup_{i=1}^m \big\{e\cup e'\colon \quad e\subseteq U_i,\  \varnothing\neq e'\in E(\H_i),\ |e\cup e'|\in D_i\},
\end{equation}
denoted by $(\H, (U_i, D_i, \H_i)_{i=1}^m)$, is called the \textit{hybrid hypergraph} of $\H$ with respect to the gluing triples $(U_i, D_i, \H_i)_{i=1}^m$. The hypergraphs $\H_1,\ldots,\H_m$ are the \textit{glued components} of $(\H, (U_i, D_i, \H_i)_{i=1}^m)$.
\end{definition}
%--------------------------------------------------
\begin{remark}
From the definition of hybrid hypergraphs, it is evident that $\H$ is an induced subhypergraph of $(\H, (U_i, D_i, \H_i)_{i=1}^m)$. As a result, every independent set in $\H$ is an independent set in $(\H, (U_i, D_i, \H_i)_{i=1}^m)$ as well.
\end{remark}

The independence complex of hybrid hypergraphs and their facets look wild if there is no constraints on the gluing triples $(U_i, D_i, \H_i)$. To resolve this, we apply the PIP-condition (see definition below) on gluing triples in order to describe the independence complex of hybrid hypergraphs. In what follows, we consider the \textit{Minkowski difference} $X - Y$ of two sets $X, Y$ of integers as the set $\{x - y \colon \; x \in X, \ y \in Y\}$. Also, for a hypergraph $\H$, let $\H^X$ stand for the spanning subhypergraph of $\H$ including all edges of sizes belonging to $X$, and $\Size(\H)$ denote the set $\{|e|\colon\ e\in E(\H)\}$. If $a, b$ are integers with $a\leq b$, then the set of all integers $x$ with $a\leq x\leq b$ is denoted by the interval $[a, b]$.
%--------------------------------------------------
\begin{definition}
Let $\H$ be a hypergraph, $D$ be a set of non-negative integers, and $\alpha$ be a positive integer. The triple $(\alpha, D, \H)$ satisfies the \textit{proper independence property} (PIP) if 
\begin{itemize}
\item[(a)]every $G$ in $\F(\Delta_{\H^{D-[0, i+1]}})$ is contained properly in some $G'$ in $\F(\Delta_{\H^{D-[0, i]}})$, 
\item[(b)]every $G'$ in $\F(\Delta_{\H^{D-[0, i]}})$ contains properly some $G$ in $\F(\Delta_{\H^{D-[0, i+1]}})$, 
\end{itemize}
for all $0\leq i<\alpha$. It turns out that $[\alpha]\subseteq D - \Size(\H)$. More precisely, every $i\in[\alpha]$ belongs to $D-(\Size(\H)\setminus(D-[0,i-1]))$.
\end{definition}

If $(\alpha, D, \H)$ is any triple, then in general we have the following series of simplicial complexes
\[\Delta_{\H^{D-[0,\alpha]}}\subseteq\Delta_{\H^{D-[0,\alpha-1]}}\subseteq\cdots\subseteq\Delta_{\H^{D-[0,1]}}\subseteq\Delta_{\H^{D-[0,0]}}\]
showing that every facet $G$ of $\Delta_{\H^{D-[0,i+1]}}$ is contained in a facet $G'$ of $\Delta_{\H^{D-[0,i]}}$ for all $0\leq i<\alpha$. Being a PIP-triple indicates that not only every facet $G$ of $\Delta_{\H^{D-[0,i+1]}}$ is contained ``properly'' in a facet $G'$ of $\Delta_{\H^{D-[0,i]}}$ for all $0\leq i<\alpha$ but also every facet $G'$ of $\Delta_{\H^{D-[0,i]}}$ contains a facet $G$ of $\Delta_{\H^{D-[0,i+1]}}$ properly for all $0\leq i<\alpha$.
%--------------------------------------------------
\begin{example}\label{Examples for PIP-triples}\ 
\begin{itemize}
\item[(i)]Let $\C$ be a $d$-uniform clutter, $D=\{d\}$, and $0<\alpha\leq d$. If $\H$ is the hypergraph induced by the edge-set $\C\cup(\gen{V(\C)}^{[d-2]}\setminus X)$, where $X\subseteq\gen{V(\C)}^{[d-\alpha-2]}$, then
\[\C'^{D-[0,i]}=\C\cup(\gen{V(\C)}^{[d-2]}\setminus \gen{V(\C)}^{[d-i-2]})\]
for all $0\leq i\leq\alpha$. It follows that $\Delta_{\C'^{D-[0,i]}}=\gen{V(\C)}^{[d-i-2]}$, for all $1\leq i\leq\alpha$. Hence $(\alpha, D, \C')$ is a PIP-triple satisfying $\C'^D=\C^D$.
\item[(ii)]Let $\S$ be a simplicial complex of dimension $d - 1$, $D=\{d\}$, and $0<\alpha<d$. Then the triple $(\alpha, D, \S)$ satisfies PIP(a) but not PIP(b) in general. Indeed, if $\S$ is a simplicial complex and $G$ is any independent set in $\S^{D-[0,i+1]}$ with $i<\alpha$, then any $G'\supset G$ with $|G'\setminus G|=1$ is an independent set in $\S^{D-[0,i]}$. On the other hand, the triple $(\alpha, \{3\}, \S)$, where $\S$ is the simplicial complex $\langle 124,134,234,235,136,127\rangle\cup\gen{45,46,47,56,57,67}$ and $\alpha>0$ does not satisfy PIP(b). To see this, we observe that the facet $123$ of $\Delta_{\S^{D-[0,0]}}$ does not contain any of the facets of $\Delta_{\S^{D-[0,1]}}=\langle 15, 26, 37\rangle$.
\item[(iii)]The triple $(\alpha, \{3\}, \H)$, where $E(\H)=\{1, 2, 3, 4, 12, 24, 34, 123\}$ and $0<\alpha<3$ satisfies PIP while it is not a simplicial complex.
\end{itemize}
\end{example}

In order to state our main result of this section, we need some preparations and preliminary lemmas.
%--------------------------------------------------
\begin{definition}
Let $\H$ be a hypergraph. A set $D$ of vertices of $\H$ is a \textit{strong dominating set} in $\H$ if $\alpha(\H/D)=0$ or equivalently every singleton subset of $V(\H/D)$ is an edge of $\H/D$. Here by $\H/D$ we mean the hypergraph whose edge set is
\[\{ e\setminus D \colon \; e \in E(\H) \}.\]
\end{definition}
%--------------------------------------------------
\begin{lemma}\label{F(Delta_H/v)*v subseteq F(Delta_H)}
Let $\H$ be a hypergraph and $v\in V(\H)$. Then
\begin{itemize}
\item[{\rm (i)}]$\alpha(\H\setminus v)\leq\alpha(\H)$ with equality if $v$ is a shedding vertex of $\Delta_\H$,
\item[{\rm (ii)}]$\alpha(\H/v)\leq\alpha(\H)-1$ except when $\{v\} \in E(\H)$. Indeed, if $\{v\}\notin\H$ then $\F(\Delta_{\H/v})\star\{\{v\}\}\subseteq\F(\Delta_\H)$.
\end{itemize}
\end{lemma}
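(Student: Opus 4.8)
\emph{Plan.} The strategy is to translate everything into the independence complex $\Delta_\H$ and use the identities $\alpha(\H)=\dim\Delta_\H+1$ (and the same for $\H\setminus v$, $\H/v$). First I would record two translations. Since $\H\setminus v$ is nothing but the induced subhypergraph $\H[V(\H)\setminus\{v\}]$, a set $F\subseteq V(\H)\setminus\{v\}$ is independent in $\H\setminus v$ exactly when it is independent in $\H$ (an edge $e\subseteq F$ automatically avoids $v$); hence $\Delta_{\H\setminus v}=\Delta_\H\setminus v$ as simplicial complexes. Secondly, using the elementary equivalence $e\setminus\{v\}\subseteq F\Leftrightarrow e\subseteq F\cup\{v\}$, valid whenever $v\notin F$, a set $F\subseteq V(\H)\setminus\{v\}$ is independent in $\H/v$ iff $F\cup\{v\}\in\Delta_\H$; therefore, \emph{provided} $\{v\}\notin E(\H)$ (so that $\H/v$ has no empty edge and $\{v\}\in\Delta_\H$), one gets $\Delta_{\H/v}=\link_{\Delta_\H}(v)$. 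With these in hand, (i) and (ii) become assertions about the dimension of a deletion and of a link.

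For (i): from $\Delta_\H\setminus v\subseteq\Delta_\H$ one immediately gets $\alpha(\H\setminus v)=\dim(\Delta_\H\setminus v)+1\leq\dim\Delta_\H+1=\alpha(\H)$. For the equality clause I would first verify the standard reformulation that $v$ is a shedding vertex of $\Delta_\H$ iff for every facet $H$ of $\Delta_\H\setminus v$ one has $H\cup\{v\}\notin\Delta_\H$ (equivalently, no facet of $\Delta_\H\setminus v$ lies in $\link_{\Delta_\H}(v)$); this is a short argument from the paper's definition of shedding vertex. Then pick a facet $F$ of $\Delta_\H$ with $|F|=\alpha(\H)$. If $v\notin F$, then $F$ is already a facet of the subcomplex $\Delta_\H\setminus v$, so $\alpha(\H\setminus v)\geq|F|=\alpha(\H)$. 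If $v\in F$, then $F\setminus\{v\}\in\Delta_\H\setminus v$, so it is contained in some facet $H$ of $\Delta_\H\setminus v$; the reformulation gives $H\cup\{v\}\notin\Delta_\H$, whereas $(F\setminus\{v\})\cup\{v\}=F\in\Delta_\H$, so $H\neq F\setminus\{v\}$, i.e. $F\setminus\{v\}\subsetneq H$ and hence $|H|\geq|F|=\alpha(\H)$. Either way $\alpha(\H\setminus v)\geq\alpha(\H)$, so equality holds.

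For (ii): assume $\{v\}\notin E(\H)$ and use $\Delta_{\H/v}=\link_{\Delta_\H}(v)$. If $G$ is a facet of $\link_{\Delta_\H}(v)$ of maximal dimension, then $G\cup\{v\}\in\Delta_\H$, so $|G|+1\leq\alpha(\H)$; since $\alpha(\H/v)=\dim\link_{\Delta_\H}(v)+1=|G|$, this gives $\alpha(\H/v)\leq\alpha(\H)-1$. Moreover $G\cup\{v\}$ is in fact a facet of $\Delta_\H$: any face $H$ of $\Delta_\H$ with $H\supsetneq G\cup\{v\}$ contains $v$, so $H\setminus\{v\}\in\link_{\Delta_\H}(v)$ strictly contains $G$, contradicting the maximality of $G$. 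As this argument applies to every facet $G$ of $\Delta_{\H/v}=\link_{\Delta_\H}(v)$, it yields $\F(\Delta_{\H/v})\star\{\{v\}\}\subseteq\F(\Delta_\H)$ (and re-proves the inequality). I would then note briefly that the exclusion of the case $\{v\}\in E(\H)$ is necessary: in that case $\varnothing\in E(\H/v)$, which under the convention $\mathbf{x}_\varnothing=0$ imposes no condition, so $\H/v$ behaves like $\H$ with the independence-irrelevant vertex $v$ simply removed and $\alpha$ need not drop.

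The main obstacle is the equality clause in (i): it is the only place where the shedding hypothesis genuinely enters, and the subcase $v\in F$ forces one to combine the reformulation of ``shedding vertex'' with the observation that the facet $H\supseteq F\setminus\{v\}$ of $\Delta_\H\setminus v$ must be \emph{strictly} larger than $F\setminus\{v\}$. Everything else is a routine dictionary translation between $\H$ and $\Delta_\H$.
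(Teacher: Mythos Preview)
Your proof is correct and follows essentially the same route as the paper: both argue through the independence complex, using $\Delta_{\H\setminus v}=\Delta_\H\setminus v$ for part (i) and the correspondence $G\in\Delta_{\H/v}\Leftrightarrow G\cup\{v\}\in\Delta_\H$ for part (ii). Your packaging of (ii) via the identification $\Delta_{\H/v}=\link_{\Delta_\H}(v)$ is a clean shortcut the paper does not make explicit (it argues directly with edges), and your treatment of the case $v\in F$ in (i) is more carefully spelled out than the paper's, but the underlying arguments coincide.
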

\begin{proof}
(i) It is evident that $\alpha(\H\setminus v)\leq\alpha(\H)$. Suppose $v$ is a shedding vertex of $\Delta_\H$. Then every facet of $\Delta_{\H\setminus v}=\Delta_\H\setminus v$ is a facet of $\Delta_\H$. If $\alpha(\H)=|F|$ with $F$ a facet of $\Delta_\H$, then $v\notin F$ so that $F\in\Delta_{\H\setminus v}$. Thus $\alpha(\H)\leq\alpha(\H\setminus v)$, which implies that $\alpha(\H\setminus v)=\alpha(\H)$.

(ii) We show that $G\cup\{v\}\in\Delta_\H$ when $G\in\Delta_{\H/v}$. If not, $G\cup\{v\}$ contains an edge $e$ of $\H$. Then either $e\subseteq G$ or $e=e'\cup\{v\}$ for some $e' \in E(\H/v)$. In both cases, $G$ contains an edge of $\H/v$, which is a contradiction. Now, let $G\in\F(\Delta_{\H/v})$. If $G\cup\{v\}\notin\F(\Delta_\H)$, then $G\cup\{v\}\cup\{v'\}\in\Delta_\H$ for some $v'\in V(\H)\setminus(G\cup\{v\})$. As $G\cup\{v'\}\notin\Delta_{\H/v}$, $\H/v$ contains an edge $e\subseteq G\cup\{v'\}$. Then either $e \in E(\H)$ or $e=e'\setminus\{v\}$ for some $e' \in E(\H)$ containing $v$. In both cases, $G\cup\{v\}$ contains an edge of $\H$, a contradiction. Thus $G\cup\{v\}\in\F(\Delta_\H)$, from which it follows that $\F(\Delta_{\H/v})\star\{\{v\}\}\subseteq\F(\Delta_\H)$.
\end{proof}
%--------------------------------------------------
\begin{corollary}\label{alpha(H/D)<=alpha(H)}
If $\H$ is a hypergraph and $\H'$ is a $(W,f)$-deletion of $\H$, then 
\[\alpha(\H')\leq\alpha(\H)-\wvd_f(\H')\leq\alpha(\H).\]
\end{corollary}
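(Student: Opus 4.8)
The plan is to prove both inequalities simultaneously by induction on $|W|$, each step removing one vertex from $W$ and invoking Lemma \ref{F(Delta_H/v)*v subseteq F(Delta_H)}. If $W=\varnothing$, then $\H'=\H$ and $\wvd_f(\H')=0$, so both inequalities hold trivially. For the inductive step, fix $w\in W$. Using the three commutativity identities for strong and weak vertex deletions recorded above, I may reorder the sequence of deletions that defines $\H'$ so that the operation attached to $w$ is carried out first; thus $\H'$ is a $(W\setminus\{w\},f')$-deletion of $\H_0$, where $f'=f|_{W\setminus\{w\}}$ and $\H_0=\H\setminus w$ if $f(w)=0$ while $\H_0=\H/w$ if $f(w)=1$. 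Since $|W\setminus\{w\}|<|W|$ and $\wvd_f(\H')=\wvd_{f'}(\H')+f(w)$, the induction hypothesis applied to $\H_0$ yields $\alpha(\H')\le\alpha(\H_0)-\wvd_{f'}(\H')$.

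It then remains to compare $\alpha(\H_0)$ with $\alpha(\H)$, which is exactly what Lemma \ref{F(Delta_H/v)*v subseteq F(Delta_H)} does. If $f(w)=0$, part (i) gives $\alpha(\H_0)=\alpha(\H\setminus w)\le\alpha(\H)$, so $\alpha(\H')\le\alpha(\H)-\wvd_{f'}(\H')=\alpha(\H)-\wvd_f(\H')$. If $f(w)=1$, part (ii) gives $\alpha(\H_0)=\alpha(\H/w)\le\alpha(\H)-1$, so $\alpha(\H')\le\alpha(\H)-1-\wvd_{f'}(\H')=\alpha(\H)-\wvd_f(\H')$. In both cases the left inequality of the corollary follows, and the right inequality $\alpha(\H)-\wvd_f(\H')\le\alpha(\H)$ is immediate since $\wvd_f(\H')=\sum_{w\in W}f(w)\ge0$.

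The one place that needs care is the exceptional clause of Lemma \ref{F(Delta_H/v)*v subseteq F(Delta_H)}(ii): when $f(w)=1$ and $\{w\}\in E(\H)$ at the relevant stage of the recursion, a weak deletion of $w$ creates the empty edge, so $\H/w$, and hence $\H'$, has no independent set at all; reading $\alpha$ of such a hypergraph as $-\infty$, consistently with $\alpha=\dim\Delta_\H+1$ and the convention $\mathbf{x}_\varnothing=0$, makes the bound $\alpha(\H/w)\le\alpha(\H)-1$, and therefore the corollary, hold trivially there. Beyond this bookkeeping I do not expect any genuine obstacle: the whole content sits in Lemma \ref{F(Delta_H/v)*v subseteq F(Delta_H)}, and the corollary is just its iteration. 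Should a non-inductive argument be preferred, one can instead check directly that if $F$ is a maximum independent set of $\H'$ and $W_1=\{w\in W:f(w)=1\}$, then $F\cup W_1$ is independent in $\H$ of size $\alpha(\H')+\wvd_f(\H')$ --- any edge of $\H$ contained in $F\cup W_1$ avoids every strongly deleted vertex and so, after deleting $W_1$, reappears as an edge of $\H'$ inside $F$ --- which forces $\alpha(\H)\ge\alpha(\H')+\wvd_f(\H')$ at once.
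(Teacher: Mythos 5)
Your argument is correct and matches the paper's intent: the corollary is stated there without proof as an immediate consequence of Lemma \ref{F(Delta_H/v)*v subseteq F(Delta_H)}, and your induction on $|W|$, peeling off one vertex at a time via the commutativity of strong and weak deletions and then invoking parts (i) and (ii) of that lemma, is exactly that iteration. Your explicit handling of the degenerate case $\{w\}\in E$ (and the alternative direct argument showing $F\cup W_1$ is independent in $\H$) goes beyond what the paper records, but it is consistent with the paper's combinatorial definition of independent sets and does not change the substance of the proof.
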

%--------------------------------------------------
\begin{lemma}\label{Every hypergraph has a strong dominating independent set}
Every maximal independent set of a hypergraph is a strong dominating set.
\end{lemma}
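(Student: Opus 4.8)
The plan is to unwind the two equivalent formulations of the definition. A set $F$ of vertices is a strong dominating set in $\H$ exactly when $\alpha(\H/F)=0$, which is to say that every singleton $\{w\}$ with $w\in V(\H)\setminus F$ is an edge of $\H/F$; and by the description of the edge set of $\H/F$ this last condition holds for a given $w$ precisely when there is an edge $e\in E(\H)$ with $e\setminus F=\{w\}$. So, for a maximal independent set $F$, it is enough to exhibit, for each vertex $w\in V(\H)\setminus F$, an edge $e$ of $\H$ satisfying $e\setminus F=\{w\}$.

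First I would use the maximality of $F$: since $F$ is a maximal independent set and $w\notin F$, the set $F\cup\{w\}$ fails to be independent, hence contains some edge $e\in E(\H)$. Then I would use the independence of $F$ itself: since $e\not\subseteq F$ but $e\subseteq F\cup\{w\}$, we must have $w\in e$ and $e\setminus F=\{w\}$. This shows $\{w\}\in E(\H/F)$ for every $w\in V(\H/F)$, so $\alpha(\H/F)=0$ and $F$ is a strong dominating set, as desired.

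I do not anticipate a genuine obstacle; the only things worth a sentence are the degenerate cases. If $\varnothing\in E(\H)$, then no set is independent, so $\H$ has no maximal independent set and there is nothing to prove; thus one may assume $\varnothing\notin E(\H)$, which also guarantees that the set $e\setminus F=\{w\}$ produced above really is a nonempty singleton. If $F=V(\H)$, then $V(\H/F)=\varnothing$, so the strong-domination condition is vacuously satisfied (and in fact independence of $F$ then forces $E(\H)=\varnothing$ anyway). Apart from these trivialities, the two-step argument above is the whole proof.
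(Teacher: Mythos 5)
Your proof is correct and follows essentially the same route as the paper: for each vertex $w$ outside the maximal independent set, maximality yields an edge $e\subseteq F\cup\{w\}$, independence forces $e\setminus F=\{w\}$, hence $\{w\}\in E(\H/F)$ and $\alpha(\H/F)=0$. Your extra remarks on the degenerate cases are harmless additions, not a different argument.
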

\begin{proof}
Let $\H$ be a hypergraph and $I$ be a maximal independent set in $\H$. If $v\in V(\H)\setminus I$, then $I\cup\{v\}$ contains an edge $e$ of $\H$. Clearly, $v\in e$ so that $\{v\}\in E(\H/I)$. Thus $I$ is a strong dominating set in $\H$.
\end{proof}

According to the above settings, we are in the position to state and prove our results on the structure and combinatorial/topological properties of hybrid hypergraphs.
%--------------------------------------------------
\begin{theorem}\label{H'=(H,(U_i,D_i,H_i))}
Let $\H'=(\H, (U_i, D_i, \H_i)_{i=1}^m)$ be a hybrid hypergraph of $\H$, where $\H$ is a hypergraph with vertex partition $U_1\dot\cup\cdots\dot\cup U_m\dot\cup V$. If $(\alpha(\H[U_i]), D_i, \H_i)_{i=1}^m$ is a family of PIP-triples, then
\begin{itemize}
\item[\rm (i)]$\dim \Delta_{\H'} = \sum_{i=1}^m\dim \Delta_{\H_i^{D_i}} +\dim \Delta_{\H[V]}+m$,
\item[\rm (ii)]$\Delta_{\H'}$ is pure if and only if $\Delta_{\H[V]}$ is pure, and $\Delta_{\H_i^{D_i-[0,s]}}$ is pure and 
\[\dim\Delta_{\H_i^{D_i-[0,s]}}-\dim\Delta_{\H_i^{D_i-[0,t]}}=t-s,\]
for all $1\leq i\leq m$ and $0\leq s\leq t\leq \alpha(\H[U_i])$,
\item[{\rm (iii)}]Let $W_i$ be a strong dominating independent set in $\H[U_i]$, for every $1\leq i\leq m$. Then $\H'$ is sequentially Cohen-Macaulay/shellable/vertex-decomposable if and only if 
\[\H[V],\quad \H_i^{D_i-[0,\alpha(\U_i)+\wvd_{f_i}(\U_i)]}\]
are so for all $1\leq i\leq m$ and $(W_i,f_i)$-deletions $\U_i$ of $\H[U_i]$.
\end{itemize}
\end{theorem}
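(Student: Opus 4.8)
The plan is to first make the combinatorics of $\Delta_{\H'}$ completely explicit; this is where the PIP-condition does its work. A subset $F$ of $V(\H')$ is independent in $\H'$ exactly when $F\cap V(\H)$ is independent in $\H$ and, for every $i$, $F\cap V(\H_i)$ is independent in $\H_i^{D_i-[0,\,|F\cap U_i|]}$ (immediate from the defining edge set, since the subsets $e\subseteq U_i$ occurring there realise precisely the sizes $0,1,\dots,|F\cap U_i|$). Next I would record the key consequence of PIP(a): a facet of $\Delta_{\H_i^{D_i-[0,s]}}$ can never stay independent in $\H_i^{D_i-[0,s+1]}$ for $0\le s<\alpha(\H[U_i])$ — otherwise it would be a facet of $\Delta_{\H_i^{D_i-[0,s+1]}}$ and hence, by PIP(a), sit properly inside a facet of $\Delta_{\H_i^{D_i-[0,s]}}$, a contradiction. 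Combining these gives
\[
\F(\Delta_{\H'})=\Big\{\,A\cup G_1\cup\cdots\cup G_m\ :\ A\ \text{a $V$-maximal independent set of }\H,\ \ G_i\in\F\big(\Delta_{\H_i^{D_i-[0,\,|A\cap U_i|]}}\big)\,\Big\},
\]
the point being that no vertex of a $U_i$ can be added back to such a set: PIP(a) blocks it on the $\H_i$-side when $|A\cap U_i|<\alpha(\H[U_i])$, and independence in $\H$ blocks it when $|A\cap U_i|=\alpha(\H[U_i])$.

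\textbf{Parts (i) and (ii).} These are read off from the facet description. The same use of PIP(a) shows $\dim\Delta_{\H_i^{D_i-[0,s]}}>\dim\Delta_{\H_i^{D_i-[0,s+1]}}$ for $0\le s<\alpha(\H[U_i])$, so for every facet $A\cup\bigcup_i G_i$ one has $|A\cap U_i|+\dim\Delta_{\H_i^{D_i-[0,\,|A\cap U_i|]}}\le\dim\Delta_{\H_i^{D_i}}$; together with $|A\cap V|\le\alpha(\H[V])$ this bounds $\dim\Delta_{\H'}$ from above, and the bound is attained by taking $A$ to be (the copy in $\H$ of) a maximum independent set of $\H[V]$, which is automatically $V$-maximal in $\H$ and meets no $U_i$ — this is (i). For (ii), the cardinality of a facet $A\cup\bigcup_i G_i$ equals $|A\cap V|+\sum_i\big(|A\cap U_i|+|G_i|\big)$, and forcing this to be constant over all facets translates exactly into: $\Delta_{\H[V]}$ pure, each $\Delta_{\H_i^{D_i-[0,s]}}$ pure for $0\le s\le\alpha(\H[U_i])$, and the dimension of $\Delta_{\H_i^{D_i-[0,s]}}$ dropping by exactly one at each step; the stated identity $\dim\Delta_{\H_i^{D_i-[0,s]}}-\dim\Delta_{\H_i^{D_i-[0,t]}}=t-s$ is then the telescope of the last condition.

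\textbf{Part (iii).} I would argue by induction on $N:=\sum_{i=1}^m|U_i|$. If $N=0$, every $\H[U_i]$ is empty, the only admissible $W_i$ is empty, every $\U_i$ is the empty hypergraph with exponent $0$, and $\H'$ is the disjoint union $\H[V]\,\dot\cup\,\H_1^{D_1}\,\dot\cup\cdots\dot\cup\,\H_m^{D_m}$, so $\Delta_{\H'}=\Delta_{\H[V]}\star\Delta_{\H_1^{D_1}}\star\cdots\star\Delta_{\H_m^{D_m}}$ and the claim is Theorem \ref{Delta, link, and Delta * Delta'}(iii). For $N>0$ fix $i$ with $U_i\ne\varnothing$ and $w\in W_i$ (if the only admissible $W_i$ is empty, the vertices of $U_i$ play no role in $\Delta_{\H'}$ and one reduces $U_i$ to $\varnothing$ directly); note $\{w\}\notin E(\H)$ since $W_i$ is independent. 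Using the facet description one checks that $w$ is a shedding vertex of $\Delta_{\H'}$ (a facet of $\Delta_{\H'}\setminus w$ cannot admit $w$, again by PIP(a)), that $\Delta_{\H'}\setminus w=\Delta_{\H'\setminus w}$ and $\link_{\Delta_{\H'}}(w)=\Delta_{\H'/w}$, and that $\H'\setminus w$ and $\H'/w$ are again hybrid hypergraphs with $N-1$ vertices in their $U$-parts whose triples remain PIP-triples: for the weak deletion the $i$-th gluing datum becomes $(U_i\setminus w,\,D_i\cup(D_i-1),\,\H_i)$, and one uses the identity $\H_i^{(D_i\cup(D_i-1))-[0,j]}=\H_i^{D_i-[0,j+1]}$ together with $\alpha\big((\H[U_i])/w\big)\le\alpha(\H[U_i])-1$ (Lemma \ref{F(Delta_H/v)*v subseteq F(Delta_H)}(ii), available since $\{w\}\notin E(\H)$); the strong-deletion case is immediate since PIP for a given $\alpha$ implies PIP for smaller ones. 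One then matches the families of ``glued components'': via the shift identity the $(W_i,f_i)$-deletions of $\H[U_i]$ with $f_i(w)=1$ give the glued components of $\H'/w$ (for which $W_i\setminus w$ is again a strong dominating independent set, its quotient by $W_i\setminus w$ agreeing with the quotient of $\H[U_i]$ by $W_i$), and — when $W_i$ restricts to a strong dominating set of $\H[U_i]\setminus w$ — those with $f_i(w)=0$ give the glued components of $\H'\setminus w$; all remaining components (from $j\ne i$ and from $\H[V]$) are unchanged. Feeding this into the induction hypothesis and invoking Theorem \ref{Delta, link, and Delta * Delta'}(ii) for sequential Cohen--Macaulayness and shellability, and the recursive definition with shedding vertex $w$ for vertex-decomposability, yields the ``if'' direction. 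For ``only if'' I would realise each listed component directly as a link of $\Delta_{\H'}$ up to a join with simplices: $\Delta_{\H[V]}=\link_{\Delta_{\H'}}(G_1\cup\cdots\cup G_m)$ for any $G_i\in\F(\Delta_{\H_i^{D_i}})$, and for a $(W_i,f_i)$-deletion $\U_i$ the link of the face assembled from the weakly deleted vertices of $\U_i$, a maximum independent set of $\U_i$ (which by Lemma \ref{Every hypergraph has a strong dominating independent set} is a strong dominating set of $\U_i$, blocking any further extension inside $U_i$) and facets of the $\Delta_{\H_j^{D_j}}$ for $j\ne i$, is $\Delta_{\H_i^{D_i-[0,\,\alpha(\U_i)+\wvd_{f_i}(\U_i)]}}$ joined with simplices; Theorem \ref{Delta, link, and Delta * Delta'}(i),(iii) then transfers the property down, using also Corollary \ref{alpha(H/D)<=alpha(H)} to see every exponent lies in $[0,\alpha(\H[U_i])]$.

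\textbf{Where the difficulty sits.} The facet description and parts (i),(ii) are routine once the PIP-mechanism above is set up. The delicate point in (iii) is the bookkeeping identifying the ``$f_i(w)=0$'' subfamily of components of $\H'$ with the components of $\H'\setminus w$: when the given strong dominating set $W_i$ does not restrict to a strong dominating set of $\H[U_i]\setminus w$ — which genuinely occurs, e.g.\ when $W_i$ is a maximal but not maximum independent set such as the centre of a star — strong-deleting $w$ ``frees'' vertices that $W_i$ used to dominate, and the exponent set $\{\alpha(\U_i)+\wvd_{f_i}(\U_i)\}$, hence the list of complexes $\H_i^{D_i-[0,k]}$ occurring, changes. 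Reconciling this amounts to showing that the list of conditions is, up to the PIP-forced relations among the $\Delta_{\H_i^{D_i-[0,k]}}$, independent of the chosen strong dominating independent set; isolating the precise ``interpolation'' statement for these complexes, and an inductive hypothesis robust enough to carry it through, is the main obstacle.
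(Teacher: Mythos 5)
Your facet description of $\Delta_{\H'}$ and your treatment of (i) and (ii) coincide with the paper's argument; the only real difference is that you extract the key fact --- a facet of $\Delta_{\H_i^{D_i-[0,s]}}$ must contain an edge of $\H_i^{D_i-[0,s+1]}$ whenever $s<\alpha(\H[U_i])$ --- from PIP(a), whereas the paper derives it from PIP(b); both derivations are valid. Your plan for (iii) is also the paper's: a vertex $w\in W_i$ is a shedding vertex, $\Delta_{\H'}\setminus w$ and $\link_{\Delta_{\H'}}(w)$ are again independence complexes of hybrid hypergraphs with $i$-th data $(U_i\setminus\{w\},D_i,\H_i)$ and $(U_i\setminus\{w\},D_i-[0,1],\H_i)$, one inducts and invokes Theorem \ref{Delta, link, and Delta * Delta'}(ii),(iii), and the converse is obtained by realising the listed complexes as links.

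However, you do not actually prove (iii): your closing paragraph concedes that the strong-deletion bookkeeping is open, and this is a genuine gap, not a routine verification. Concretely, if $\H[U_i]$ is a star with centre $w$ and $k\geq 2$ leaves and $W_i=\{w\}$, then $W_i\setminus\{w\}=\varnothing$ is not a strong dominating set of $\H[U_i]\setminus w$ (which is edgeless), so the induction hypothesis cannot be applied to $\H'\setminus w$ with $W_i\setminus\{w\}$; applying it with the only available strong dominating independent set there (the full set of leaves) would require $\H_i^{D_i-[0,s]}$ to have the property for every $0\le s\le k$, while your hypothesis only supplies $s\in\{1,k\}$ --- exactly the interpolation statement you admit you cannot isolate. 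The same phenomenon undercuts your sketch of the ``only if'' direction: the face assembled from $T=f_i^{-1}(1)$ and a maximum independent set $M$ of $\U_i$ need not be maximal inside $U_i$ (vertices of $W_i\setminus T$ may remain addable), so its link is another hybrid complex rather than $\Delta_{\H_i^{D_i-[0,\alpha(\U_i)+\wvd_{f_i}(\U_i)]}}$ joined with simplices, and extracting the latter requires a further argument. You should also know that the published proof does not settle this point: it asserts it is ``obvious'' that $W_j\setminus\{u_j^1\}$ is strong dominating in both $\H[U_j]\setminus u_j^1$ and $\H[U_j]/u_j^1$, which holds for the weak deletion but fails for the strong deletion (your star example refutes it), and the statement that the list of conditions is independent of the chosen $W_i$ is Proposition \ref{H^(D-[0,x_1]) and H^(D-[0,x_2])}, which the paper deduces from Theorem \ref{H'=(H,(U_i,D_i,H_i))} and hence cannot be invoked here without circularity. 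So you have correctly located the crux, but neither your proposal nor a direct transcription of the paper's induction closes it; completing (iii) needs a genuinely new argument at this spot (for instance an a priori proof of the interpolation property, or a hypothesis quantified in a way that is stable under strong deletion of a vertex of $W_i$).
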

\begin{proof}
Let $\Delta=\Delta_{\H}$ and $\Delta'=\Delta_{\H'}$ be the independence complexes of $\H$ and $\H'$, respectively. For every $i$ with $1\leq i\leq m$ and any set $X$ of integers, let  $\F(\Delta_{\H_i^X})=\{G_{i,1}^X,\ldots,G_{i,k_{i,X}}^X\}$. To each $V$-maximal set $F\in\Delta$ there corresponds a family of sets 
\[\I_{j_1, \ldots, j_m	}^F := F \cup G_{1,j_1}^{D_1-[0,a_1^F]} \cup \cdots \cup G_{m,j_m}^{D_m-[0,a_m^F]},\]
where $j_i\in\{1,\ldots,k_{i,D_i-[0,a_i^F]}\}$ and $a_i^F:=|F \cap U_i|$, for $i=1, \ldots, m$. We show that $\I_{j_1, \ldots, j_m}^F$ is a facet of $\Delta'$. First observe that $\I_{j_1, \ldots, j_m}^F$ is an independent set in $\H'$. If not, $\I_{j_1, \ldots, j_m}^F$ contains an edge $e \in E(\H')$. Notice that $e\nsubseteq F$ and hence $e\subseteq (F\cap U_i)\cup G_{i,j_i}^{D_i-[0,a_i^F]}$ for some $1\leq i\leq m$. Since $|e\cap(F\cap U_i)|\leq|F\cap U_i|=a_i^F$, it follows that 
\[\varnothing\neq e\cap G_{i,j_i}^{D_i-[0,a_i^F]}=e\cap V(\H_i) \in E(\H_i^{D_i-[0,a_i^F]}),\]
contradicting the fact that $G_{i,j_i}^{D_i-[0,a_i^F]}$ is independent in $\H_i^{D_i-[0,a_i^F]}$. Now, we show that $\I_{j_1, \ldots, j_m	}^F$ is a facet of $\Delta'$. Let $u\in U_i\setminus F$ for some $1\leq i\leq m$, and $F':=F\cup\{u\}$. Then $a_i^{F'}=a_i^F+1$. By PIP(b), the facet $G_{i,j_i}^{D_i-[0,a_i^F]}$ of $\Delta_{\H_i^{D_i-[0,a_i^F]}}$ contains a facet $G'$ of $\Delta_{\H_i^{D_i-[0,a_i^{F'}]}}$ properly. Thus $G_{i,j_i}^{D_i-[0,a_i^F]}$ contains an edge $e$ of $\H_i^{D_i-[0,a_i^{F'}]}$. If $e_u:=e\cup ((F\cup\{u\})\cap U_i)$, then $e_u$ is an edge of $\H'$ contained in $F\cup\{u\}\cup G_{i,j_i}^{D_i-[0,a_i^F]}$. On the other hand, by the definition, $\I_{j_1, \ldots, j_m}^F\cup\{u\}$ is not an independent set for any $u\in (V\setminus F)\cup\bigcup_{i=1}^m(V(\H_i)\setminus G_{i,j_i}^{D_i-[0,a_i^F]})$. This shows that $\I_{j_1, \ldots, j_m	}^F$ is a facet of $\Delta'$. 

Next we show that every facet of $\Delta'$ is of the form $\I_{j_1, \ldots, j_m}^F$, where $F \in \Delta$ is $V$-maximal. Given $G \in \Delta'$, let us define
\begin{align*}
F &:=G \cap V(\H), \\
a_i^F &:=|F \cap U_i|,&i&=1, \ldots, m, \\
G_i &:=  G \cap V(\H_i),&i&=1, \ldots, m.
\end{align*}
Since $G$ is an independent set in $\H'$, we observe that $F$ is an independent set in $\H$ and $G_i$ is independent in $\H_i^{D_i-[0,a_i^F]}$, for all $1\leq i\leq m$. Hence, for every $1\leq i\leq m$, there exists $j_i\in\{1,\ldots k_{i,D_i-[0,a_i^F]}\}$ such that $G_i \subseteq G_{i,j_i}^{D_i-[0,a_i^F]}$. If $F'\supseteq F$ is a $V$-maximal independent set of $\H$ such that $F'\cap U_i=F\cap U_i$, for $i=1,\ldots,m$, then
\begin{align*}
G= F \cup G_1 \cup \cdots \cup G_m \subseteq F' \cup G_{1,j_1}^{D_1-[0,a_1^F]} \cup \cdots \cup G_{m,j_m}^{D_m-[0,a_m^F]} = \I_{j_1, \ldots, j_m}^{F'}.
\end{align*}
Hence, $\I_{j_1, \ldots, j_m}^{F'}$ are the only facets of $\Delta'$. 

(i) For every $i$ with $1\leq i\leq m$, there exists $G_{i,j'_i}^{D_i}\in\Delta_{\H_i^{D_i}}$ (by PIP(a)) such that
\begin{align*}
|\I_{j_1, \ldots, j_r}^F| &= |F| + \sum\limits_{i=1}^m|G_{i,j_i}^{D_i-[0,a_i^F]}| \\
&= |F\cap V|+\sum\limits_{i=1}^m|(F\cap U_i)\cup G_{i,j_i}^{D_i-[0,a_i^F]}| \\
&\leq\alpha(\H[V])+\sum\limits_{i=1}^m|G_{i,j'_i}^{D_i}|\\
&\leq\dim \Delta_{\H[V]} + \sum\limits_{i=1}^m\dim \Delta_{\H_i^{D_i}} + m + 1 = |\I_{j''_1, \ldots, j''_m}^{F''}|,
\end{align*}
where $F''$ is an independent set in $\H[V]$ of maximum size and $j''_i$ is such that $G_{i,j''_i}^{D_i}$ has maximum dimension in $\Delta_{\H_i^{D_i}}$, for $i=1\ldots,m$. It follows that \[\dim \Delta' = \sum_{i=1}^m\dim \Delta_{\H_i^{D_i}} + \dim \Delta_{\H[V]} +m.\]

(ii) Clearly, $\Delta'$ is pure if and only if $|(F\cap U_i)\cup G_{i,j_i}^{D_i-[0,a_i^F]}|=|G_{i,j'_i}^{D_i}|$ for all $V$-maximal $F\in\Delta$, $i\in\{1,\ldots,m\}$, $j_i\in\{1,\ldots,k_{i,D_i-[0,a_i^F]}\}$, and $j'_i\in\{1,\ldots,k_{i,D_i}\}$. This shows that $\Delta'$ is pure if and only if $\Delta_{\H[V]}$ is pure, $\Delta_{\H_i^{D_i-[0,s]}}$ is pure, for all $0\leq s\leq \alpha(\H[U_i])$, and that
\[\dim\Delta_{\H_i^{D_i-[0,s]}}-\dim\Delta_{\H_i^{D_i-[0,t]}}=t-s,\]
for all $0\leq s\leq t\leq \alpha(\H[U_i])$.

(iii) Let $u\in U_l$ ($1\leq l\leq m$) be such that $\{u\}\notin\H$. Put $U'_i=U_i\setminus\{u\}$ and $D'_i=D_i-[0,\delta_{i,l}]$, for $i=1,\ldots,m$. Clearly, 
\[\Delta'\setminus u=\gen{\I_{j_1, \ldots, j_r}^F\colon\ F\subseteq V(\H)\setminus\{u\}\ \text{is $V$-maximal in}\ \Delta_\H}\]
by PIP(a), from which we conclude that $u$ is a shedding vertex of $\Delta'$. A simple verification shows that
\begin{equation}\label{Delta'-u}
\Delta'\setminus u=\Delta_{(\H\setminus u, (U'_i, D_i, \H_i)_{i=1}^m)}.
\end{equation}
On the other hand,
\[\link_{\Delta'}(u)=\gen{\I_{j_1,\ldots,j_m}^F\setminus\{u\}\colon\ u\in F\subseteq V(\H)\ \text{is $V$-maximal in}\ \Delta_\H}.\]
We show that
\begin{equation}\label{link_Delta'(u)}
\link_{\Delta'}(u)=\Delta_{(\H/u, (U'_i, D'_i, \H_i)_{i=1}^m)}.
\end{equation}
Let $\H^*:=(\H/u, (U'_i, D'_i, \H_i)_{i=1}^m)$. If $F\in\link_{\Delta'}(u)$ and $F\notin\Delta_{\H^*}$, then $F	$ contains an edge $e_1\cup e_2$ of $\H^*$, where $e_1\subseteq U_l\setminus\{u\}$, $\varnothing\neq e_2 \in E(\H_l)$, and $|e_1\cup e_2|\in D'_l=D_l-[0,1]$. Then $(e_1\cup\{u\})\cup e_2$ is an edge of $\H'$ contained in $F\cup\{u\}$, which is a contradiction. Thus $\link_{\Delta'}(u)\subseteq\Delta_{\H^*}$. Conversely, let $F\in\Delta_{\H^*}$. If $F\notin\Delta'$, then $F$ contains an edge $e_1\cup e_2$ of $\H'$, where $e_1\subseteq U_l$, $\varnothing\neq e_2 \in E(\H_l)$, and $|e_1\cup e_2|\in D_l$. This yields that $e_1\cup e_2\subseteq F$ is an edge of $\H^*$, which contradicts our assumption. Thus $F\in\Delta'$. To show that $F\in\link_{\Delta'}(u)$, we prove $F\cup\{u\}\in\Delta'$. Suppose on the contrary that $F\cup\{u\}$ contains an edge $e_1\cup e_2$ of $\H'$, where $e_1\subseteq U_l$, $\varnothing\neq e_2 \in E(\H_l)$, and $|e_1\cup e_2|\in D_l$. Clearly, $u\in e_1$ and $(e_1\setminus\{u\})\cup e_2\subseteq F$ is an edge of $\H^*$, which is a contradiction. Thus $F\in\link_{\Delta'}(u)$. This shows the equality in \eqref{link_Delta'(u)}.

Let $W_i=\{u_i^1,\ldots,u_i^{k_i}\}\subseteq U_i$ be a strong dominating independent set of $\H[U_i]$, for $i=1,\ldots,m$ (see Lemma \ref{Every hypergraph has a strong dominating independent set}). First assume that $\H[V]$ and $\H_i^{D_i-[0,\alpha(\U_i)+\wvd_{f_i}(\U_i)]}$ are sequentially Cohen-Macaulay, for all $i=1,\ldots,m$ and $(W_i,f_i)$-deletions $\U_i$ of $\H[U_i]$. Let $1\leq j\leq m$ be such that $k_j>0$ and put $W'_j=W_j\setminus\{u_j^1\}$. It is obvious that $W'_j$ is a strong dominating independent set in both $\H[U_j]\setminus u_j^1$ and $\H[U_j]/u_j^1$.

From Corollary \ref{alpha(H/D)<=alpha(H)}, it is evident that $\Delta'\setminus u_j^1=\Delta_{(\H\setminus u_j^1,(U'_i,D_i,\H_i)_{i=1}^m)}$ is sequentially Cohen-Macaulay because for all $(W'_j,f'_j)$-deletion $\U'_j$ of $(\H\setminus u_j^1)[U'_j]$,
\begin{align*}
\Delta_{\H_j^{D_j-[0,\alpha(\U'_j)+\wvd_{f'_j}(\U'_j)]}}&=\Delta_{\H_j^{D_j-[0,\alpha(\U_j)+\wvd_{f_j}(\U_j)]}}
\end{align*}
is sequentially Cohen-Macaulay by assumption, where $\U_j$ is the $(W_j,f_j)$-deletion of $\H[U_j]$ with $f_j(u_j^1)=0$ and $f_j|_{W'_j}=f'_j$.

On the other hand, $\link_{\Delta'}(u_j^1)=\Delta_{(\H/u_j^1, (U'_i, D'_i, \H_i)_{i=1}^m)}$. Let $\U'_j$ be a $(W'_j,f'_j)$-deletion of $(\H/u_j^1)[U'_j]$, and $\U_j$ be the $(W_j,f_j)$-deletion of $\H[U_j]$, where $f_j(u_j^1)=1$ and $f_j|_{W'_j}=f'_j$. Since
\begin{align*}
\Delta_{\H_j^{D'_j-[0,\alpha(\U'_j)+\wvd_{f'_j}(\U'_j)]}}&=\Delta_{\H_j^{D_j-[0,1]-[0,\alpha(\U'_j)+\wvd_{f'_j}(\U'_j)]}}\\
&=\Delta_{\H_j^{D_j-[0,\alpha(\U_j)+\wvd_{f_j}(\U_j)]}}
\end{align*}
is sequentially Cohen-Macaulay by assumption for any such $\U'_j$, we conclude that $\link_{\Delta'}(u_j^1)$ is sequentially Cohen-Macaulay too. Therefore, $\Delta'$ is sequentially Cohen-Macaulay by Theorem \ref{Delta, link, and Delta * Delta'}(ii) in this case. Finally, assume that $k_1=\cdots=k_m=0$. Then 
\[\Delta'=\Delta_{\H[V]}\star\Delta_{\H_1^{D_1}}\star\cdots\star\Delta_{\H_i^{D_m}}.\]
Notice that $\alpha(\H[U_i])=0$ so that $\Delta_{\H_i^{D_i}}=\Delta_{\H_i^{D_i-[0,\alpha(\H[U_i])]}}$ is sequentially Cohen-Macaulay for $i=1,\ldots,m$ by assumption. Applying Theorem \ref{Delta, link, and Delta * Delta'}(iii), it follows that $\Delta'$ is sequentially Cohen-Macaulay. 

Conversely, suppose that $\Delta'$ is sequentially Cohen-Macaulay. By Theorem \ref{Delta, link, and Delta * Delta'}(i), we get $\Delta_{\H[V]}=\Delta'[V]=\link_{\Delta'}(\bigcup_{i=1}^mG_{i,1}^{D_i})$ is sequentially Cohen-Macaulay. On the other hand, if $G_i\in\Delta_{\H[U_i]}$ ($1\leq i\leq m$) and $F$ is a $V$-maximal independent set in $\H$ such that $F\cap U_i=G_i$, then
\begin{equation}\label{link_Delta'(G)=Delta((H_U_i,(U_i,D_i,H_i)))}
G_i\in\link_{\Delta'}(G)=\Delta_{(\H[U_i],(U_i,D_i,\H_i))},
\end{equation}
where $G=\I_{1,\ldots,1}^F\setminus (G_i\cup G_{i,1}^{D_i-[0,|G_i|]})$. Thus, if $G_i$ is a maximal independent set in $\H[U_i]$, then
\[\link_{\Delta'}(G\cup G_i)=\link_{\link_{\Delta'}(G)}(G_i)=\Delta_{\H_i^{D_i-[0,|G_i|]}}\]
is sequentially Cohen-Macaulay by Theorem \ref{Delta, link, and Delta * Delta'}(i). In particular, $\Delta_{\H_i^{D_i-[0,\alpha(\H[U_i])]}}$ is sequentially Cohen-Macaulay. Let $\U_i$ be a $(W_i,f_i)$-deletion of $\H[U_i]$. Put $G_i=\{w\in W\colon\ f_i(w)=1\}$. By \eqref{link_Delta'(u)} and \eqref{link_Delta'(G)=Delta((H_U_i,(U_i,D_i,H_i)))}, we observe that
\[\link_{\Delta'}(G\cup G_i)=\Delta_{(\H[U_i]/G_i,(U_i\setminus G_i,D_i-[0,\wvd_{f_i}(\U_i)],\H_i))}\]
is sequentially Cohen-Macaulay, which implies that $\Delta_{\H_i^{D_i-[0,\alpha(\U_i)+\wvd_{f_i}(\U_i)]}}$ is sequentially Cohen-Macaulay as well.

The cases of shellability and vertex-decomposability follow from the same discussions as above.
\end{proof}

%--------------------------------------------------
\begin{corollary}\label{Hybrid hypergraphs of designs}
Let $\H$ be a hypergraph with vertex partition $U_1\dot\cup\cdots\dot\cup U_m$ and $d_i=\alpha(\H[U_i])$, for $i=1,\ldots,m$. Let $\C_i$ be a $d_i$-uniform clutter and $\H_i$ be the hypergraph induced by the edge-set $\C_i\cup\gen{V(\C_i)}^{[d_i-2]}$ for $i=1,\ldots,m$. Let $\H'$ be the hybrid hypergraph $(\H,(U_i,\{d_i\},\H_i)_{i=1}^m)$. Then 
\begin{itemize}
\item[\rm (i)]$\dim\Delta_{\H'} = \sum_{i=1}^m\dim \Delta_{\C_i} + m - 1$,
\item[\rm (ii)]$\Delta_{\H'}$ is pure if and only if $\C_i=\binom{V(\C_i)}{d_i}$ is complete $d_i$-clutter, for all $i=1,\ldots,m$,
\item[\rm (iii)]$\H'$ is vertex-decomposable if and only if $\C_i$ is vertex-decomposable for all $1\leq i\leq m$ such that $\H[U_i]$ has a unique maximal independent set,
\end{itemize}
\end{corollary}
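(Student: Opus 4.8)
The plan is to obtain all three assertions from Theorem \ref{H'=(H,(U_i,D_i,H_i))}, so the preliminary work is to verify its PIP-hypothesis and to identify the auxiliary complexes $\Delta_{\H_i^{D_i-[0,s]}}$ occurring in it. Each gluing triple is $(U_i,\{d_i\},\H_i)$, and since $d_i=\alpha(\H[U_i])$ the triple to be checked is $(d_i,\{d_i\},\H_i)$; this is exactly Example \ref{Examples for PIP-triples}(i) applied with $\C=\C_i$, $D=\{d_i\}$, $\alpha=d=d_i$ and $X=\varnothing$. That example also describes the relevant subhypergraphs: $\H_i^{D_i}=\C_i$ (its only edges of size $d_i$ are the circuits), and $\Delta_{\H_i^{D_i-[0,s]}}=\gen{V(\C_i)}^{[d_i-s-2]}$ is a skeleton of the simplex on $V(\C_i)$ for $1\le s\le d_i$. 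In particular $\Delta_{\H_i^{D_i}}=\Delta_{\C_i}$; moreover, since a circuit has size $d_i$ and hence cannot lie inside a set of size $\le d_i-1$, one always has $\gen{V(\C_i)}^{[d_i-2]}\subseteq\Delta_{\C_i}$, with equality precisely when every $d_i$-subset of $V(\C_i)$ is a circuit, i.e. when $\C_i=\binom{V(\C_i)}{d_i}$. Finally, the ``free'' block $V$ of the vertex partition of $\H$ is empty, so $\Delta_{\H[V]}=\{\varnothing\}$, a simplex of dimension $-1$. Granting all this, part (i) is immediate from Theorem \ref{H'=(H,(U_i,D_i,H_i))}(i): $\dim\Delta_{\H'}=\sum_i\dim\Delta_{\H_i^{D_i}}+\dim\Delta_{\H[V]}+m=\sum_i\dim\Delta_{\C_i}+m-1$.

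For (ii), Theorem \ref{H'=(H,(U_i,D_i,H_i))}(ii) reduces purity of $\Delta_{\H'}$ to: $\Delta_{\H[V]}$ pure (automatic); each $\Delta_{\H_i^{D_i-[0,s]}}$ pure; and $\dim\Delta_{\H_i^{D_i-[0,s]}}-\dim\Delta_{\H_i^{D_i-[0,t]}}=t-s$ for $0\le s\le t\le d_i$. For $s\ge1$ the complexes $\gen{V(\C_i)}^{[d_i-s-2]}$ are pure and already satisfy this dimension-drop relation among themselves, so the only remaining constraint is that $\Delta_{\C_i}=\Delta_{\H_i^{D_i}}$ be pure with $\dim\Delta_{\C_i}=\dim\Delta_{\H_i^{D_i-[0,1]}}+1=d_i-2$. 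Combined with the inclusion $\gen{V(\C_i)}^{[d_i-2]}\subseteq\Delta_{\C_i}$ above, purity of $\Delta_{\C_i}$ together with $\dim\Delta_{\C_i}=d_i-2$ forces $\Delta_{\C_i}=\gen{V(\C_i)}^{[d_i-2]}$, i.e. $\C_i=\binom{V(\C_i)}{d_i}$; conversely a complete $d_i$-clutter yields $\Delta_{\C_i}=\gen{V(\C_i)}^{[d_i-2]}$ and all the conditions are met.

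For (iii), pick for each $i$ a maximal independent set $W_i$ of $\H[U_i]$, which by Lemma \ref{Every hypergraph has a strong dominating independent set} is a strong dominating independent set; Theorem \ref{H'=(H,(U_i,D_i,H_i))}(iii) (in its vertex-decomposability form) then says $\H'$ is vertex-decomposable iff $\H[V]$ (which is trivially so) and every $\Delta_{\H_i^{D_i-[0,\alpha(\U_i)+\wvd_{f_i}(\U_i)]}}$ is vertex-decomposable, as $\U_i$ runs over all $(W_i,f_i)$-deletions of $\H[U_i]$. By Corollary \ref{alpha(H/D)<=alpha(H)} the exponent $s:=\alpha(\U_i)+\wvd_{f_i}(\U_i)$ lies in $\{0,\ldots,d_i\}$, and for $s\ge1$ the complex $\Delta_{\H_i^{D_i-[0,s]}}$ is a skeleton of a simplex, hence vertex-decomposable (for any vertex $v$ its link and deletion are again skeletons of simplices and $v$ is a shedding vertex, so one induces on the number of vertices). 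Thus the only possible obstruction is vertex-decomposability of $\Delta_{\H_i^{D_i}}=\Delta_{\C_i}$, and it is present exactly when $s=0$ is actually attained for block $i$. Finally one checks that $s=0$ is attained for block $i$ iff $\H[U_i]$ has a unique maximal independent set: if $W_i$ is that unique set, then every vertex of $U_i\setminus W_i$ is a loop of $\H[U_i]$, so the $(W_i,f_i)$-deletion with $f_i\equiv0$, namely $\H[U_i]\setminus W_i$, has $\alpha=0$ and $\wvd=0$; conversely $s=0$ forces $\wvd_{f_i}(\U_i)=0$, so $\U_i=\H[U_i]\setminus S$ with $S\subseteq W_i$ and $\alpha(\H[U_i]\setminus S)=0$, which (because $W_i$ is loop-free) is only possible when $S=W_i$ and $U_i\setminus W_i$ consists entirely of loops, making $W_i$ — being independent — the unique maximal independent set. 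Hence $\H'$ is vertex-decomposable iff $\C_i$ is vertex-decomposable for every $i$ with $\H[U_i]$ having a unique maximal independent set.

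The ideas are essentially forced once Theorem \ref{H'=(H,(U_i,D_i,H_i))} and Example \ref{Examples for PIP-triples}(i) are in hand; the delicate point is the bookkeeping near the degenerate endpoints of the range of $s$ (where the skeleton $\gen{V(\C_i)}^{[d_i-s-2]}$ collapses to $\{\varnothing\}$), together with the equivalence in (iii) between the value $s=0$ being attained and $\H[U_i]$ having a unique maximal independent set — precisely the place where the hypothesis highlighted in the statement of (iii) is used.
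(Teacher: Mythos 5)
Your proposal is correct relative to the paper's framework and follows essentially the same route as the paper's own proof: PIP for $(d_i,\{d_i\},\H_i)$ via Example \ref{Examples for PIP-triples}(i), identification of $\Delta_{\H_i^{\{d_i\}-[0,s]}}$ with skeletons of a simplex (hence pure and vertex-decomposable for $s\geq 1$), application of Theorem \ref{H'=(H,(U_i,D_i,H_i))}, and the observation that $\alpha(\U_i)+\wvd_{f_i}(\U_i)=0$ occurs exactly when $\H[U_i]$ has a unique maximal independent set. The only difference is expository: you spell out the dimension and purity bookkeeping for parts (i) and (ii), which the paper declares immediate, and you prove the $s=0$ equivalence in (iii) that the paper records as a one-line observation.
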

\begin{proof}
By Example \ref{Examples for PIP-triples}(i), $(d_i,\{d_i\},\H_i)$ is a PIP-triple for all $1\leq i\leq m$. To prove part (iii), let $W_i$ be a strong dominating independent set in $\H[U_i]$ and $\U_i$ be a $(W_i,f_i)$-deletion of $\H[U_i]$. One observe that $\alpha(\U_i)+\wvd_{f_i}(\U_i)=0$ if and only if $W_i$ is the unique maximal independent set of $\H[U_i]$ and $f_i\equiv0$. Since  $\Delta_{\H_i^{\{d_i\}-[0,j]}}=\gen{V(\C_i)}^{[d-j-2]}$ is vertex-decomposable for all $1\leq i\leq m$ and $1\leq j\leq d_i$, Theorem \ref{H'=(H,(U_i,D_i,H_i))}(iii) states that $\H'$ is vertex-decomposable if and only if $\Delta_{\H_i^{\{d_i\}-[0,0]}}=\Delta_{\C_i}$ is vertex-decomposable for all $1\leq i\leq m$ such that $\H[U_i]$ has a unique maximal independent set. Parts (i) and (ii) follow immediately from Theorem \ref{H'=(H,(U_i,D_i,H_i))}(i,ii).
\end{proof}
%--------------------------------------------------
\begin{corollary}\label{C'=C U (e in U_i U V(H_i): |e|=d and e cap V(H_i) in H_i)}
Let $\C$ be a $d$-uniform clutter, $U_1, \ldots, U_m$ be a clique partition of $\C\setminus V$ for some subset $V$ of $V(\C)$, and let $\H_1,\ldots,\H_m$ be hypergraphs such that the triples $(\alpha(\C[U_i]),\{d\},\H_i)$ satisfy the PIP-conditions, for $i=1,\ldots,m$. Let $\C'$ be the $d$-uniform clutter defined as
\[\C'=\C\cup\bigcup_{i=1}^m\{e\subseteq U_i\cup V(\H_i)\colon\quad |e|=d\emph{ and }e\cap V(\H_i) \in E(\H_i)\}.\]
Then
\begin{itemize}
\item[\rm (i)]$\dim \Delta_{\C'} = \sum_{i=1}^m\dim \Delta_{\H_i^{\{d\}}} + \dim \Delta_{\C[V]} + m$,
\item[\rm (ii)]$\Delta_{\C'}$ is pure if and only if $\Delta_{\C[V]}$ is pure, and $\Delta_{\H_i^{[s,d]}}$ is pure and 
\[\dim\Delta_{\H_i^{[t,d]}}-\dim\Delta_{\H_i^{[s,d]}}=t-s,\]
for all $i=1,\ldots,m$ and $\max\{d-|U_i|,1\}\leq s\leq t\leq d$,
\item[\rm (iii)]$\C'$ is sequentially Cohen-Macaulay/shellable/vertex-decomposable if and only if $\C[V]$ and $\H_i^{[d-j, d]}$ are so for all $j\in J$, where $J=\{\beta,\ldots,\alpha(\C[U_i])\}$ with 
\[\beta=\begin{cases}
0,&|U_i|<d,\\
\min\{|U_i|-(d-1), d-1\},&|U_i|\geq d.
\end{cases}\]
\end{itemize}
\end{corollary}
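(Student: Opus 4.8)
The plan is to recognize $\C'$ as a hybrid hypergraph and then read off everything from Theorem~\ref{H'=(H,(U_i,D_i,H_i))}. Since $V(\C)=U_1\dot\cup\cdots\dot\cup U_m\dot\cup V$ and $V(\C),V(\H_1),\ldots,V(\H_m)$ are pairwise disjoint, every set $e$ with $e\subseteq U_i\cup V(\H_i)$ and $e\cap V(\H_i)\in E(\H_i)$ splits as the disjoint union $(e\cap U_i)\cup(e\cap V(\H_i))$ with $e\cap U_i\subseteq U_i$, $\varnothing\neq e\cap V(\H_i)\in E(\H_i)$, and the constraint $|e|=d$ is exactly $|e\cap U_i|+|e\cap V(\H_i)|\in\{d\}$; conversely every edge of the hybrid construction with triple $(U_i,\{d\},\H_i)$ has this form. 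Hence $\C'=(\C,(U_i,\{d\},\H_i)_{i=1}^m)$, and since $(\alpha(\C[U_i]),\{d\},\H_i)_{i=1}^m$ is assumed to be a family of PIP-triples, Theorem~\ref{H'=(H,(U_i,D_i,H_i))} applies with $\H=\C$ and $D_i=\{d\}$. Everything that follows is a translation of its three conclusions, using repeatedly that $\{d\}-[0,j]=[d-j,d]$ for every integer $j\ge 0$, so that $\H_i^{\{d\}-[0,j]}=\H_i^{[d-j,d]}$.

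The only structural fact about the $U_i$ that is needed is $\alpha(\C[U_i])=\min\{|U_i|,d-1\}$, which holds because $U_i$ is a clique of the $d$-uniform clutter $\C$: when $|U_i|<d$ the induced clutter $\C[U_i]$ has no circuits and $\Delta_{\C[U_i]}$ is the simplex on $U_i$, while when $|U_i|\ge d$ one has $\C[U_i]=\binom{U_i}{d}$ and $\Delta_{\C[U_i]}$ is the $(d-2)$-skeleton of that simplex. Part~(i) is then immediate from Theorem~\ref{H'=(H,(U_i,D_i,H_i))}(i). For part~(ii), Theorem~\ref{H'=(H,(U_i,D_i,H_i))}(ii) gives that $\Delta_{\C'}$ is pure exactly when $\Delta_{\C[V]}$ is pure and, for each $i$, $\Delta_{\H_i^{[d-s,d]}}$ is pure with $\dim\Delta_{\H_i^{[d-s,d]}}-\dim\Delta_{\H_i^{[d-t,d]}}=t-s$ for all $0\le s\le t\le\alpha(\C[U_i])$; reindexing via $s\leftrightarrow d-s$ and $t\leftrightarrow d-t$ and using $d-\alpha(\C[U_i])=\max\{d-|U_i|,1\}$ rewrites this as precisely the condition displayed in part~(ii).

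For part~(iii) I would take $W_i$ to be a maximal independent set of $\C[U_i]$; this is a strong dominating independent set by Lemma~\ref{Every hypergraph has a strong dominating independent set}, with $|W_i|=\alpha(\C[U_i])=\min\{|U_i|,d-1\}$. By Theorem~\ref{H'=(H,(U_i,D_i,H_i))}(iii), $\C'$ is sequentially Cohen--Macaulay/shellable/vertex-decomposable if and only if $\C[V]$ and all the hypergraphs $\H_i^{\{d\}-[0,j]}=\H_i^{[d-j,d]}$ are, as $j$ ranges over $\{\alpha(\U_i)+\wvd_{f_i}(\U_i):\U_i\text{ a }(W_i,f_i)\text{-deletion of }\C[U_i]\}$; so it remains to identify this set of exponents with $J$. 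A $(W_i,f_i)$-deletion $\U_i$ of the clique $\C[U_i]$ strongly deletes some $q$-subset $S\subseteq W_i$ and then weakly deletes the remaining $p:=|W_i|-q$ vertices of $W_i$, so $\wvd_{f_i}(\U_i)=p$ and $V(\U_i)=U_i\setminus W_i$. If $|U_i|-q<d$ then $\C[U_i\setminus S]$ is already edgeless, hence so is $\U_i$ and $\alpha(\U_i)=|U_i|-|W_i|$; if $|U_i|-q\ge d$ then $\C[U_i\setminus S]=\binom{U_i\setminus S}{d}$ and, after the $p$ weak deletions, the minimal edges of $\U_i$ are exactly the $(d-p)$-subsets of $U_i\setminus W_i$, so that $\alpha(\U_i)=d-p-1$ (here $|U_i\setminus W_i|\ge d-p$, so this is genuinely the independence number). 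Combining these with the a priori bound $\alpha(\U_i)+\wvd_{f_i}(\U_i)\le\alpha(\C[U_i])=|W_i|$ of Corollary~\ref{alpha(H/D)<=alpha(H)} yields
\[
\alpha(\U_i)+\wvd_{f_i}(\U_i)=
\begin{cases}
|U_i|-|W_i|+p,& |U_i|-|W_i|+p<d,\\
d-1,& |U_i|-|W_i|+p\ge d,
\end{cases}
\]
and letting $p$ run over $0,1,\ldots,|W_i|$, split according to $|U_i|<d$, $d\le|U_i|\le 2d-2$, and $|U_i|\ge 2d-1$, shows that the attained values are exactly the consecutive integers from $\beta$ up to $\min\{|U_i|,d-1\}=\alpha(\C[U_i])$, i.e.\ precisely $J$. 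I expect this final arithmetic --- in particular confirming that $\beta$ is the minimum attained exponent and that no intermediate value is skipped --- to be the only point needing real care; all the rest is a transcription of Theorem~\ref{H'=(H,(U_i,D_i,H_i))} through the substitution $D_i=\{d\}$.
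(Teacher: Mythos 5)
Your proposal is correct and takes essentially the same route as the paper: identify $\C'$ with the hybrid hypergraph $(\C,(U_i,\{d\},\H_i)_{i=1}^m)$, use $\alpha(\C[U_i])=\min\{|U_i|,d-1\}$ and a strong dominating independent set $W_i$ of that size, compute the attained exponents $\alpha(\U_i)+\wvd_{f_i}(\U_i)$ over all $(W_i,f_i)$-deletions and identify them with $J$, then apply Theorem \ref{H'=(H,(U_i,D_i,H_i))} with $\{d\}-[0,j]=[d-j,d]$. Your closed-form case analysis of the exponents (and the reindexing in part (ii)) is just a more explicit version of the paper's computation via the single-vertex deletion rules for $\alpha$ on a clique, and it checks out in all three ranges of $|U_i|$.
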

\begin{proof}
It is easy to verify that $\C'=(\C,(U_i,\{d\},\H_i)_{i=1}^m)$. Let $1\leq i\leq m$. Clearly, $\alpha(\C[U_i])=\min\{|U_i|,d-1\}$. Let $W$ be a strong dominating independent in $\C[U_i]$. Notice that a subset $\{u_1,\ldots,u_k\}$ of $U_i$ is a strong dominating independent set in $\C[U_i]$ if and only if $k=\alpha(\C[U_i])$. Also, for every subset $X$ of $U_i$ and $x\in X$, we have $\alpha(\C[X]/x)=\alpha(\C[X]) - 1$ and $\alpha(\C[X]\setminus x)=\alpha(\C[X]) - \delta_{|U_i|<d}$. It follows that the set of all $\alpha(\U_i)+\wvd_f(\U_i)$ with $\U_i$ a $(W,f)$-deletion of $\C[U_i]$ coincides with the set $J$. Now Theorem \ref{H'=(H,(U_i,D_i,H_i))} yields the desired conclusion.
\end{proof}
%--------------------------------------------------
\begin{corollary}\label{C'=C U binom(U_i U V(H_i, d)}
Let $\C$ be a $d$-uniform clutter, $U_1, \ldots, U_m$ be a clique partition of $\C$, and let $\H_1,\ldots,\H_m$ be disjoint simplexes of dimensions at least $d-2$. If
\[\C'=\C\cup\bigcup_{i=1}^m\binom{U_i\cup V(\H_i)}{d},\]
then
\begin{itemize}
\item[\rm (i)]$\Delta_{\C'}$ is pure vertex-decomposable of dimension $(d-1)m-1$, and
\item[\rm (ii)]the ring $\KK[V(\C')]/I(\C')$ is Cohen-Macaulay of dimension $(d-1)m$.	
\end{itemize}
\end{corollary}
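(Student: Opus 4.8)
The plan is to identify $\C'$ as a special case of the clutter construction in Corollary~\ref{C'=C U (e in U_i U V(H_i): |e|=d and e cap V(H_i) in H_i)} with empty core, i.e.\ with $V=\varnothing$, and then read off the three conclusions from parts (i)--(iii) of that corollary. Write $\H_i=\gen{F_i}$ with $|F_i|\geq d-1$. The first step is to check that $\C'=(\C,(U_i,\{d\},\H_i)_{i=1}^m)$: the edges of the hybrid hypergraph coming from the $i$-th triple are the sets $e\cup e'$ with $e\subseteq U_i$, $\varnothing\neq e'\subseteq F_i$ and $|e|+|e'|=d$, and since $U_i$ and $F_i$ are disjoint these are precisely the $d$-subsets of $U_i\cup F_i$ that meet $F_i$. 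Adjoining the $d$-subsets of $U_i$, which already belong to $\C$ because $U_i$ is a clique, one recovers all of $\binom{U_i\cup F_i}{d}=\binom{U_i\cup V(\H_i)}{d}$. As the sets $U_i\cup V(\H_i)$ are pairwise disjoint, this gives $\C'=(\C,(U_i,\{d\},\H_i)_{i=1}^m)$, and the requirement ``$e\cap V(\H_i)\in E(\H_i)$'' of Corollary~\ref{C'=C U (e in U_i U V(H_i): |e|=d and e cap V(H_i) in H_i)} is automatic because $\H_i$ is a simplex; so $\C'$ is exactly the clutter of that corollary for $V=\varnothing$.

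The second step records the combinatorics of simplices. A direct computation --- or Example~\ref{Examples for PIP-triples}(i) applied to the complete $d$-clutter on $V(\H_i)$, which produces the same hybrid hypergraph --- shows that
\[\Delta_{\H_i^{\{d\}-[0,s]}}=\gen{V(\H_i)}^{[d-s-2]}\qquad\text{for all }0\leq s\leq\alpha(\C[U_i])=\min\{|U_i|,d-1\},\]
with the right-hand side read as the full simplex $\gen{V(\H_i)}$ when $d-s-2\geq|V(\H_i)|-1$. Since $|V(\H_i)|\geq d-1$, this complex is pure of dimension $d-s-2$, and the facets for consecutive values of $s$ differ by exactly one vertex; this yields at once PIP(a) and PIP(b) for the triple $(\alpha(\C[U_i]),\{d\},\H_i)$ and also the purity hypotheses of Corollary~\ref{C'=C U (e in U_i U V(H_i): |e|=d and e cap V(H_i) in H_i)}(ii), because $\dim\Delta_{\H_i^{[t,d]}}-\dim\Delta_{\H_i^{[s,d]}}=t-s$ throughout the relevant range. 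Moreover $\Delta_{\C[\varnothing]}=\{\varnothing\}$ is a simplex, so the hypotheses on $\C[V]$ in Corollary~\ref{C'=C U (e in U_i U V(H_i): |e|=d and e cap V(H_i) in H_i)}(ii)--(iii) hold vacuously.

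In the third step I would assemble the output. Corollary~\ref{C'=C U (e in U_i U V(H_i): |e|=d and e cap V(H_i) in H_i)}(i) gives $\dim\Delta_{\C'}=\sum_{i=1}^m\dim\Delta_{\H_i^{\{d\}}}+\dim\Delta_{\C[\varnothing]}+m=m(d-2)+(-1)+m=(d-1)m-1$; part (ii) gives that $\Delta_{\C'}$ is pure; and part (iii) gives that $\C'$ is vertex-decomposable, the only extra ingredient being the standard fact that every skeleton $\gen{F}^{[k]}$ of a simplex is vertex-decomposable, proved by induction on $|F|$ from $\link_{\gen{F}^{[k]}}(v)=\gen{F\setminus v}^{[k-1]}$ and $\gen{F}^{[k]}\setminus v=\gen{F\setminus v}^{[k]}$, with $v$ a shedding vertex of the pure complex $\gen{F}^{[k]}$. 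This proves (i). For (ii), $\Delta_{\C'}$ is vertex-decomposable, hence shellable and sequentially Cohen-Macaulay, and being pure it is Cohen-Macaulay; therefore $\KK[V(\C')]/I(\C')=\KK[\Delta_{\C'}]$ is Cohen-Macaulay of Krull dimension $\dim\Delta_{\C'}+1=(d-1)m$.

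The only delicate point is bookkeeping at the extremes: when $|V(\H_i)|=d-1$ (that is, $\dim\H_i=d-2$) the hypergraph $\H_i^{\{d\}}$ has no edges and $\Delta_{\H_i^{\{d\}}}$ is the full simplex rather than a proper skeleton, and at the low end of the index range one meets $\Delta_{\H_i^{[1,d]}}=\{\varnothing\}$; one must confirm that in every such case the uniform formula $\gen{V(\H_i)}^{[d-s-2]}$, its purity, and the value $d-s-2$ of its dimension remain correct. Once this is checked, everything else is a routine translation through Corollary~\ref{C'=C U (e in U_i U V(H_i): |e|=d and e cap V(H_i) in H_i)}.
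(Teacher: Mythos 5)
Your proposal is correct and follows essentially the same route as the paper: reduce to Corollary~\ref{C'=C U (e in U_i U V(H_i): |e|=d and e cap V(H_i) in H_i)} with $V=\varnothing$, verify the PIP-condition through the identification $\Delta_{\H_i^{\{d\}-[0,s]}}=\gen{V(\H_i)}^{[d-s-2]}$, and deduce part (ii) from the fact that a pure vertex-decomposable complex is Cohen-Macaulay with $\dim\KK[\Delta_{\C'}]=\dim\Delta_{\C'}+1$. Your extra checks (that $\C'$ really is the hybrid clutter, that skeletons of simplexes are vertex-decomposable, and the boundary cases $|V(\H_i)|=d-1$) merely make explicit what the paper's terser proof leaves implicit.
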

\begin{proof}
First observe that $(\alpha(\C[U_i]),\{d\},\H_i)_{i=1}^m$ is a family of PIP-triples for $\dim \H_i \geq d-2$ viewed as a simplicial complex and $\Delta_{\H_i^{\{d\}-[0,i]}}=\gen{V(\H_i)}^{[d-i-2]}$, for all $i=0,\ldots,\alpha(\C[U_i])$. In view of Corollary \ref{C'=C U (e in U_i U V(H_i): |e|=d and e cap V(H_i) in H_i)}(ii), $\Delta_{\C'}$ is pure and vertex-decomposable of dimension $(d-1)m-1$. Part (ii) follows from the more general fact that pure vertex-decomposable simplicial complexes are Cohen-Macaulay and that $\dim\KK[\Delta_{\C'}]=1+\dim\Delta_{C'}$ (see \cite[Theorem 5.1.4]{wb-jh}).
\end{proof}

Theorem \ref{H'=(H,(U_i,D_i,H_i))} and its related corollaries establish alternate proofs for previously known results we address here.
%--------------------------------------------------
\begin{example}\label{examples to first construction}\ 
\begin{itemize}
\item[(i)]Let $G$ be a graph with vertex set $V(G)=\{v_1,\ldots,v_n\}$. The corona graph $G'$ of $G$ is a graph obtained from $G$ by attaching a pendant to each vertex of $G$, that is $G'$ is the graph with vertex set $V(G)\cup\{w_1,\ldots,w_n\}$ and edge set $E(G)\cup\{v_iw_i\colon\ 1\leq i\leq n\}$. It is shown by Villarreal \cite[Proposition 2.2]{rhv-1} that the graph $G'$ is Cohen-Macaulay. Villarreal \cite[Proposition 5.4.10]{rhv-2} improves his result by showing that $G'$ is pure and shellable. Later Dochtermann and Engstr\"{o}m \cite[Theorem 4.4]{ad-ae} prove that $G'$ is indeed pure and vertex-decomposable. Hibi, Higashitani, Kimura, and O'Keefe \cite{th-ah-kk-abo} and Cook II and Nagel \cite{dc-un} give two generalizations of Villarreal's construction. In \cite[Theorem 1.1]{th-ah-kk-abo}, Hibi et. al. show that the graph obtained from identification of every vertex $v$ of $G$ with a vertex of a complete graph $G_v$ is still pure and vertex-decomposable. Cook II and Nagel \cite[Theorem 3.3 and Corollary 3.5]{dc-un} apply a different generalization and show that $\Delta_{G^\pi}$ is vertex-decomposable and Cohen-Macaulay if $G$ is a graph and $G^\pi$ is the graph obtained from $G$ with a clique partition $\pi=\{W_1,\ldots,W_t\}$ as follows: $V(G^\pi)=V(G)\cup\{w_1,\ldots,w_t\}$ for some distinct vertices $w_1,\ldots,w_t$ not in $G$, and $E(G^\pi)=E(G)\cup\{vw_i\colon\ v\in W_i\}$. All of these results and consequences thereafter are special cases of Corollary \ref{C'=C U binom(U_i U V(H_i, d)}.
\item[(ii)]In \cite[Proposition 4.3]{ad-ae} the authors show that if $G_r$ is the graph obtained from an $r$-cycle with attaching a new vertex to two adjacent vertices of the cycle, then $I(G_r)$ is vertex-decomposable and hence sequentially Cohen-Macaulay. This follows simply from Corollary \ref{C'=C U (e in U_i U V(H_i): |e|=d and e cap V(H_i) in H_i)}.
\item[(iii)]Let $G$ be a chordal graph that is $G$ has no induced cycles of length greater than $3$. In \cite[Theorem 3.2]{caf-avt}, the authors show that $\Delta_G$ is sequentially Cohen-Macaulay. Later, in \cite[Theorem 2.13]{avt-rhv} it is shown that $\Delta_G$ is indeed shellable. This result is also strengthened by Woodroofe \cite[Corollary 7(2)]{rw2009} (and independently by Dochtermann and Engstr\"{o}m \cite[Theorem 4.1]{ad-ae}) by showing that $\Delta_G$ is vertex-decomposable. We use our method to obtain the mentioned results. First observe that the chordal graph $G$ has a vertex $v$ with complete neighborhood $U$ (see \cite{gad}). If $V:=V(G)\setminus(U\cup\{v\})$, then $(G\setminus v, (U, \{2\}, \gen{v}))=G$. Hence, an inductive argument in conjunction with Corollary \ref{C'=C U (e in U_i U V(H_i): |e|=d and e cap V(H_i) in H_i)} shows that $\Delta_G$ is vertex-decomposable.
\item[(iv)]Let $\Delta$ be a simplicial complex on the vertex set $V=\{v_1, \ldots, v_n\}$. Following \cite{jb-avt}, an $m$-coloring $\chi$ of $\Delta$ is a partition $V = V_1 \cup \cdots \cup V_m$ of the vertices (where the sets $V_i$ are allowed to be empty) such that $|F \cap V_i| \leq 1$ for all $F\in\Delta$ and $1 \leq i \leq m$. For such a coloring $\chi$ of $\Delta$, define the simplicial complex $\Delta_\chi$ on the vertex set $\{v_1, \ldots, v_n, w_1, \ldots, w_m\}$ with faces $\sigma \cup \tau$ where $\sigma \in \Delta$ and $\tau$ is any subset  of $\{w_i\colon\ \sigma\cap V_i=\varnothing\}$. In \cite[Theorem 7]{jb-avt}, it is shown that $\Delta_\chi$ is pure and vertex-decomposable. In the following we show that this result is an immediate consequence of Theorem~\ref{H'=(H,(U_i,D_i,H_i))}. Let $\C$ and $\C'$ be the clutters with $\Delta=\Delta_{\C}$ and $\Delta_\chi=\Delta_{\C'}$. It is easy to see that $\C'=(\C,(V_i, \{2\}, \gen{w_i})_{i=1}^m)$. Suppose without loss of generality that $V_1,\ldots,V_m$ are non-empty. It follows from the definition of $m$-coloring that $\alpha(\C[V_i]) = 1$, hence $(\alpha(\C[V_i]), \{2\}, \gen{w_i})$ is a PIP-triple for all $1\leq i\leq m$. Now from Theorem \ref{H'=(H,(U_i,D_i,H_i))} we conclude that $\Delta_\chi=\Delta_{\C'}$ is pure and vertex-decomposable.
\end{itemize}
\end{example}
%==================================================
\section{Second and third constructions} \label{Second and third constructions}
The aim of this section is to give yet two more gluing techniques leading us to similar results as in Theorem \ref{H'=(H,(U_i,D_i,H_i))}. Though our constructions here coincide with the first one in very special cases, they are independent from the first construction in general. In what follows, our gluing processes are applied to clutters instead of hypergraphs and that every component is glued via a single vertex to the central clutter. The main idea of our proofs obey from that in Theorem \ref{H'=(H,(U_i,D_i,H_i))}. The following definition is used in order to describe the purity of the resulting clutters.
%--------------------------------------------------
\begin{definition}
Let $\C$ be a clutter and $\U$ be an induced subclutter of $\C$. Then $\U$ is \textit{independently embedded} in $\C$ if $F\cup X\in\F(\Delta_\C)$ with $F\subseteq V(\U)$ and $X\subseteq V(\C)\setminus V(\U)$ implies $F\in\F(\Delta_\U)$.
\end{definition}
%--------------------------------------------------
\begin{remark}
Let $\C$ be a clutter and $v\in V(\C)$. Viewing $\C$ as a hypergraph, the weakly deletion $\C/v$ is a hypergraph but not a clutter in general. However, the set $\min(\C/v)$ of all minimal elements of $\C/v$ under inclusion is a clutter whose edge ideal is the same as that of $\C/v$. This is usually referred to hypergraph reduction of $\C/v$.
\end{remark}
%--------------------------------------------------
\begin{theorem}\label{(C, (C_u)_(u in U))}
Let $\C$ be a clutter with vertex set $U\dot\cup V$ and $\{\C_u\}_{u\in U}$ be a family of non-empty clutters such that $\C$ and $\{\C_u\}_{u\in U}$ are pairwise disjoint. Let $(\C,\{\C_u\}_{u\in U})$ be the clutter obtained from $\C$ as follows:
\[ (\C,\{\C_u\}_{u\in U}) = \C \cup \bigcup_{u\in U}\{e\cup\{u\}\colon\quad e\in \C_u\}.\]
If $\C':=(\C,\{\C_u\}_{u\in U})$ and $\Delta':=\Delta_{\C'}$, then
\begin{itemize}
\item[{\rm (i)}]$\dim \Delta'  = \sum_{u\in U}|V(\C_u)| + \dim \Delta_{\C[V]}$,
\item[{\rm (ii)}]$\Delta'$ is pure if and only if $|\C_u|=1$ for all $u\in U$, $\Delta_{\C[V]}$ is pure, and $\C[V]$ is independently embedded in $\C$,
\item[{\rm (iii)}]$\C'$ is sequentially Cohen-Macaulay/shellable/vertex-decomposable if and only if $\C[V]$ and $\C_u$ are so for all $u\in U$,
\item[{\rm (iv)}]$\C'$ is Cohen-Macaulay if and only if it is pure and $\C[V]$ is Cohen-Macaulay.
\end{itemize}
\end{theorem}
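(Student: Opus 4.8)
The plan is to follow the architecture of the proof of Theorem~\ref{H'=(H,(U_i,D_i,H_i))}: first identify the facets of $\Delta'$, then read off (i) and (ii), establish (iii) by a shedding-vertex induction built on Theorem~\ref{Delta, link, and Delta * Delta'}, and finally deduce (iv) from (ii), (iii) and the standard fact that a simplicial complex is Cohen--Macaulay precisely when it is pure and sequentially Cohen--Macaulay.

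First I would describe $\F(\Delta')$. For an independent set $G$ of $\C'$ write $F:=G\cap V(\C)$ and $G_u:=G\cap V(\C_u)$ for $u\in U$; since every new circuit $e\cup\{u\}$ contains its anchor $u$, one sees that $G$ is independent exactly when $F\in\Delta_{\C}$ and $G_u\in\Delta_{\C_u}$ for every $u\in F\cap U$ (with no condition on $G_u$ when $u\notin F$). Hence the facets of $\Delta'$ are the sets $\I_F:=F\cup\bigcup_{u\in F\cap U}G_u\cup\bigcup_{u\in U\setminus F}V(\C_u)$ with $F$ a $V$-maximal independent set of $\C$ and $G_u\in\F(\Delta_{\C_u})$; the only nonobvious point in the maximality check is that a vertex $u\in U\setminus F$ cannot be added, which holds because $\C_u\neq\varnothing$ forces $V(\C_u)\notin\Delta_{\C_u}$. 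From $|G_u|+1\le|V(\C_u)|$ (same reason) one gets $|\I_F|\le|F\cap V|+\sum_u|V(\C_u)|\le\alpha(\C[V])+\sum_u|V(\C_u)|$, with equality for $F$ a maximum independent set of $\C[V]$ (which is $V$-maximal in $\C$); this proves (i). Likewise $\Delta'$ is pure iff $|\I_F|$ is constant, which, on varying the $G_u$ first and then $F$, unwinds to: each $\Delta_{\C_u}$ pure with $|G_u|=|V(\C_u)|-1$, i.e. $|\C_u|=1$, together with $|F\cap V|=\alpha(\C[V])$ for every $V$-maximal $F$; this last condition is equivalent — by extending such $F$ to a facet of $\Delta_{\C}$ using only $U$-vertices, and extending facets of $\Delta_{\C[V]}$ conversely — to $\Delta_{\C[V]}$ being pure and $\C[V]$ being independently embedded in $\C$, which is (ii).

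For (iii) I would induct on $|U|$ (the case $U=\varnothing$ being trivial). Fixing $u_0\in U$, the facet description shows that the facets of $\Delta'\setminus u_0$ are exactly the $\I_F$ with $u_0\notin F$ and that these are facets of $\Delta'$, so $u_0$ is a shedding vertex; moreover a direct computation gives
\[
\Delta'\setminus u_0=\Delta_{(\C[V(\C)\setminus u_0],\,\{\C_u\}_{u\neq u_0})}\star\langle V(\C_{u_0})\rangle
\quad\text{and}\quad
\link_{\Delta'}(u_0)=\Delta_{(\C/u_0,\,\{\C_u\}_{u\neq u_0})}\star\Delta_{\C_{u_0}},
\]
where $\C/u_0$ is taken up to hypergraph reduction. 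For the ``if'' direction, feed the inductive hypothesis (noting $(\C[V(\C)\setminus u_0])[V]=\C[V]$) and Theorem~\ref{Delta, link, and Delta * Delta'}(iii) into these two joins, using that $\langle V(\C_{u_0})\rangle$ is vertex-decomposable, to see that both $\Delta'\setminus u_0$ and $\link_{\Delta'}(u_0)$ have the desired property; then Theorem~\ref{Delta, link, and Delta * Delta'}(ii) (for sequential Cohen--Macaulayness and shellability) and the recursive definition of vertex-decomposability pass it to $\Delta'$. For the ``only if'' direction no induction is needed: the face $G^\ast:=\bigcup_{u\in U}V(\C_u)$ satisfies $\link_{\Delta'}(G^\ast)=\Delta_{\C[V]}$, and for each $u_0$ the face $\{u_0\}\cup\bigcup_{u\neq u_0}V(\C_u)\cup A$, with $A\subseteq V$ a maximal independent set of $\C$ having $A\cup\{u_0\}$ independent in $\C$, has link equal to $\Delta_{\C_{u_0}}$, so Theorem~\ref{Delta, link, and Delta * Delta'}(i) applies. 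Finally (iv) follows by combining (ii) and (iii): if $\Delta'$ is pure then each $\C_u=\{V(\C_u)\}$, so $\Delta_{\C_u}$ is a simplex boundary, hence pure and Cohen--Macaulay, whence ``$\C[V]$ Cohen--Macaulay'' already makes all hypotheses of (iii) hold and purity upgrades the resulting sequential Cohen--Macaulayness to Cohen--Macaulayness; conversely a Cohen--Macaulay $\Delta'$ is pure and sequentially Cohen--Macaulay, so $\C[V]$ is sequentially Cohen--Macaulay by (iii) and is Cohen--Macaulay because $\C[V]=\link_{\Delta'}(G^\ast)$ is a link of a Cohen--Macaulay complex.

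The step I expect to be the main obstacle is closing the induction for the link in (iii): $\link_{\Delta'}(u_0)$ is governed by the weak deletion $\C/u_0$, and the induced subclutter $(\C/u_0)[V]$ is in general strictly larger than $\C[V]$, so the inductive hypothesis does not apply verbatim. The way around this is to regroup the gluing rather than peel $u_0$ off of $\C$ directly — view $\C'=(\C^{(1)},\{\C_u\}_{u\neq u_0})$ with $\C^{(1)}=(\C,\{\C_{u_0}\})$ carrying the partition $(U\setminus u_0)\,\dot\cup\,(V\cup\{u_0\}\cup V(\C_{u_0}))$, which reduces the problem to the single-glued-component case whose ``$V$-part'' is once more $\C[V]$ — and then to settle that base case ($|U|=1$) by hand from the shedding vertex $u_0$, carefully tracking the degeneracies that occur when circuits of $\C$ through $u_0$ collapse under $\C/u_0$ and when $V(\C_{u_0})$ becomes a set of free vertices.
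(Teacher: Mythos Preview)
Your plan tracks the paper's proof almost line for line: the facet description, the computations for (i) and (ii), the identities $\Delta'\setminus u=\langle V(\C_u)\rangle\star\Delta_{(\C\setminus u,\{\C_{u'}\}_{u'\neq u})}$ and $\link_{\Delta'}(u)=\Delta_{\C_u}\star\Delta_{(\D_u,\{\C_{u'}\}_{u'\neq u})}$ with $\D_u=\min(\C/u)$, the ``only if'' half of (iii) via Theorem~\ref{Delta, link, and Delta * Delta'}(i), and the derivation of (iv) from (ii) and (iii) are all exactly as in the paper. You are also right that the paper compresses the ``if'' direction of (iii) into a single line of induction, and that the obstruction you name --- $(\D_u)[V]$ need not coincide with $\C[V]$ --- is precisely what that line glosses over.

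Unfortunately your regrouping does not repair this, because the difficulty is already present at $|U|=1$. There $\link_{\Delta'}(u)=\Delta_{\C_u}\star\link_{\Delta_\C}(u)$, and $\link_{\Delta_\C}(u)$ is controlled by the circuits of $\C$ through $u$, about which the hypothesis on $\C[V]$ says nothing. Concretely, take $U=\{u\}$, $V=\{1,2,3,4\}$, $\C=\{\{u,1,3\},\{u,1,4\},\{u,2,3\},\{u,2,4\}\}$ and $\C_u=\{\{a,b\}\}$. Then $\C[V]$ has no circuits (so $\Delta_{\C[V]}$ is a simplex) and $\Delta_{\C_u}=\langle a,b\rangle$ is two points; both are vertex-decomposable. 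Yet $\link_{\Delta'}(u)=\langle 12a,12b,34a,34b\rangle$ is the suspension of two disjoint edges, whose link at $a$ is the disconnected $1$-complex $\langle 12,34\rangle$; hence $\link_{\Delta'}(u)$ is pure but not Cohen--Macaulay, so not sequentially Cohen--Macaulay, and by Theorem~\ref{Delta, link, and Delta * Delta'}(i) neither is $\Delta'$. Thus the ``if'' direction of (iii) already fails in this single-glue example, and neither the paper's bare induction nor your base-case reduction can be completed as written.
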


\begin{proof}
Let $\Delta=\Delta_{\C}$ be the independence complex of $\C$. For every $V$-maximal face $F \in \Delta$, we construct a block of facets of $\Delta'$ as follows:

Given a $V$-maximal face $F \in \Delta$ and $u\in U$, let 
\begin{equation} \label{structure of G_i,j in (C, {C_u}_{u in U})}
\bigg\{ G_{u,1}^F, \ldots, G_{u, k_u^F}^F \bigg\} =\begin{cases}
\{V(\C_u)\}, & \text{ if } u \notin F, \\\\
\F(\Delta_{\C_u}), & \text{ if } u \in F.
\end{cases}
\end{equation}
Then, we consider the block of sets associated to $F$ defined by
\[H_{j_U}^F := F \cup \bigcup_{u\in U}G_{u,j_u}^F,\]
where $j_U=(j_u\colon\ u\in U)$ and $j_u \in \{1,\ldots,k_u^F\}$, for all $u\in U$. We claim that every facet of $\Delta'$ is of the form $H_{j_U}^F$ for some $V$-maximal face $F\in\Delta$. First of all, note that the sets $H_{j_U}^F$ mentioned above are facets of $\Delta'$. In order to prove our claim, it is enough to show that every face $G \in \Delta'$ is contained in some $H_{j_U}^F$. 

Given $G \in \Delta'$, let us define
\begin{align*}	
F_U &:= G \cap U, \\
F_V &:= G \cap V, \\
G_u &:= G \cap V(\C_u), \quad u\in U.
\end{align*}
Since $G$ is an independent set in $\C'$, we conclude that $F:=F_U\cup F_V$ is an independent set in $\C$ and $G_u \neq V(\C_u)$ if $u \in F$. Hence our construction of $G_{u,1}^F,\ldots,G_{u,k_u^F}^F$ as in \eqref{structure of G_i,j in (C, {C_u}_{u in U})} guarantees the existence of $j_u \in \{1,\ldots,k_u^F\}$ such that $G_u \subseteq G_{u,j_u}^F$ for all $u\in U$. Then
\[G= F \cup \bigcup_{u\in U}G_u\subseteq F \cup \bigcup_{u\in U}G_{u,j_u}^F \subseteq H_{j_U}^{F'},\]
where $F'\supseteq F$ is a $V$-maximal face of $\Delta$ satisfying $F'\cap U=F_U$.

(i) Note that 
\begin{align*}
|H_{j_U}^F| &= |F| + \sum_{u\in U}|G_{u,j_u}^F| \\
&= \sum_{u\in U\cap F}|G_{u,j_u}^F\cup\{u\}|+\sum_{u\in U\setminus F}|V(\C_u)|+|F\cap V|\\
&\leq \sum_{u\in U}|V(\C_u)| + \dim \Delta_{\C[V]} + 1 = |H_{j_U}^{F'}|,
\end{align*}
where $F'$ is a facet of maximum dimension in $\Delta_{\C[V]}$. It follows that 
\[\dim \Delta' = \sum_{u\in U}|V(\C_u)| + \dim \Delta_{\C[V]}.\]

(ii) Suppose $\Delta'$ is pure. It is evident that $\Delta_{\C_u}$ is pure of dimension $|V(\C_u)|-2$ for all $u\in U$, $\Delta_{\C[V]}$ is pure, and $\C[V]$ is independently embedded in $\C$. Let $u\in U$. To show that $|\C_u|=1$ suppose on the contrary that $\C_u$ has two distinct circuits $e_1$ and $e_2$. Let $F$ be a facet of $\Delta_{\C_u}$ containing $e_1\cap e_2$. Notice that $e_1\cap e_2$ is independent in $\C_u$ as $\C_u$ is a clutter. Then $|F|\leq|V(\C_u)|-2$ for $F$ misses a vertex of $e_1$ and a vertex of $e_2$. This contradicts the fact that $\Delta_{\C_u}$ is pure of dimension $|V(\C_u)|-2$. The converse is obvious.

(iii) Let $u\in U$. Then 
\begin{align}
\Delta'\setminus u&=\gen{V(\C_u)\cup H_{j_{U\setminus\{u\}}}^F\colon\ F\subseteq V(\C)\setminus\{u\}\ \text{is $V$-maximal in}\ \Delta_\C}\notag\\
&=\gen{V(\C_u)}\star\Delta_{(\C\setminus u, \{\C_{u'}\}_{u'\in U\setminus\{u\}})}.\label{Delta'-u2}
\end{align}
This implies at once that every $u\in U$ is a shedding vertex of $\Delta'$ as every facet of $\Delta'\setminus u$ is also a facet of $\Delta'$.

On the other hand,
\begin{align}
\link_{\Delta'}(u)&=\gen{H_{j_{U\setminus\{u\}}}^F\setminus\{u\}\colon\ u\in F\subseteq V(\C)\ \text{is $V$-maximal in}\ \Delta_\C}\notag\\
&=\Delta_{\C_u}\star\Delta_{(\D_u,\{\C_{u'}\}_{u'\in U})},\label{link_Delta'(u)2}
\end{align}
where $\D_u=\min(\C / u)$.

If $\Delta'$ is sequentially Cohen-Macaulay, then by Theorem \ref{Delta, link, and Delta * Delta'}(i), $\link_{\Delta'}(u)$ and hence $\C_u$ is sequentially Cohen-Macaulay for all $u\in U$. Furthermore, $\link_{\Delta'}(\cup_{u\in U}V(\C_u))=\Delta_{\C[V]}$, which implies that $\C[V]$ is sequentially Cohen-Macaulay by applying Theorem \ref{Delta, link, and Delta * Delta'}(i) once more. To prove the converse, assume that $\C[V]$ and $\C_u$ ($u\in U$) are sequentially Cohen-Macaulay. Induction on $|U|$, Theorem \ref{Delta, link, and Delta * Delta'}, and equations \eqref{Delta'-u2} and \eqref{link_Delta'(u)2}, yield the desired conclusion. The cases of shellability and vertex-decomposability is entirely the same as above.

(iv) Suppose $\C'$ is Cohen-Macaulay. Then $\Delta'$ is pure and the link of any face is Cohen-Macaulay (see \cite[Corollary 5.1.5]{wb-jh} and Theorem \ref{Delta, link, and Delta * Delta'}(i)). It follows that $\link_{\Delta'}(\cup_{u\in U}V(\C_u))=\Delta_{\C[V]}$ is Cohen-Macaulay. Conversely, if $\C[V]$ is Cohen-Macaulay and $\Delta'$ is pure, then by part (i), $|\C_u|=1$ for all $u\in U$. It follows that $\C[V]$ and $\C_u$ ($u\in U$) are (sequentially) Cohen-Macaulay. From (iii), we conclude that $\C'$ is sequentially Cohen-Macaulay. Since $\Delta'$ is pure, we get the desired conclusion.
\end{proof}
%--------------------------------------------------
\begin{corollary}[{Compare with \cite[Theorem 8.2]{sf}}]
Let $\C$ be a clutter on $[n]$ and $C_1,\ldots,C_n$ be non-empty sets such that $V(\C),C_1,\ldots,C_n$ are pairwise disjoint. Let 
\[\C':=\C\cup\{C_i\cup\{i\}\colon\ i\in[n]\}.\]
Then $\Delta_{\C'}$ is a pure and vertex-decomposable simplicial complex, hence Cohen-Macaulay.
\end{corollary}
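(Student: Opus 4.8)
The plan is to realize $\C'$ as an instance of the construction in Theorem~\ref{(C, (C_u)_(u in U))} and then read off the conclusion. Put $U := V(\C) = [n]$ and $V := \varnothing$, and for each $i\in[n]$ let $\C_i$ be the clutter on the vertex set $C_i$ whose unique circuit is $C_i$ itself. The clutters $\C_1,\ldots,\C_n$ are non-empty, and $\C,\C_1,\ldots,\C_n$ are pairwise disjoint by hypothesis; moreover, unravelling the definitions gives
\[
(\C,\{\C_i\}_{i\in[n]}) = \C\cup\bigcup_{i\in[n]}\{e\cup\{i\}\colon\ e\in\C_i\} = \C\cup\{C_i\cup\{i\}\colon\ i\in[n]\} = \C',
\]
at least at the level of independence complexes (the only possible discrepancy is a superfluous generator $\xb_{C_i\cup\{i\}}$ of $I(\C')$ in case $\{i\}$ is already a circuit of $\C$, which does not change $\Delta_{\C'}$).

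Granting this identification, the statement is an immediate application of Theorem~\ref{(C, (C_u)_(u in U))}. First I would verify purity through part~(ii): every $\C_i$ has exactly one circuit, so $|\C_i| = 1$; the complex $\Delta_{\C[V]} = \Delta_{\C[\varnothing]} = \{\varnothing\}$ is (vacuously) pure; and the condition that $\C[V]$ be independently embedded in $\C$ holds trivially, as $V = \varnothing$. Hence $\Delta_{\C'}$ is pure, of dimension $\sum_{i=1}^n|C_i|-1$ by part~(i). Next I would obtain vertex-decomposability from part~(iii), which reduces the assertion to the vertex-decomposability of $\C[V]$ and of each $\C_i$. Here $\Delta_{\C[\varnothing]} = \{\varnothing\}$ is a simplex, hence vertex-decomposable, while the facets of $\Delta_{\C_i}$ are precisely the maximal proper subsets of $C_i$; thus $\Delta_{\C_i}$ is a simplex when $|C_i| = 1$ and the boundary of a simplex when $|C_i|\geq 2$ (in the latter case any $v\in C_i$ is a shedding vertex with $\link_{\Delta_{\C_i}}(v)$ and $\Delta_{\C_i}\setminus v$ both full simplices), so $\Delta_{\C_i}$ is vertex-decomposable in all cases. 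Therefore $\C'$ is vertex-decomposable, and being also pure, $\Delta_{\C'}$ is Cohen-Macaulay, exactly as in the proof of Corollary~\ref{C'=C U binom(U_i U V(H_i, d)}(ii) (see \cite[Theorem 5.1.4]{wb-jh}).

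No genuine obstacle arises here: the whole substance sits inside Theorem~\ref{(C, (C_u)_(u in U))}, and the only points requiring attention are the bookkeeping of the degenerate situations ($V = \varnothing$, so that $\C[V]$ is the complex $\{\varnothing\}$; a singleton $C_i$; and the possible non-minimality of some edge $C_i\cup\{i\}$) together with the elementary fact that the independence complex of a clutter with a single circuit is vertex-decomposable.
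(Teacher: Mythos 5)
Your proposal is correct and follows exactly the route the paper intends: the corollary is an immediate specialization of Theorem~\ref{(C, (C_u)_(u in U))} with $U=[n]$, $V=\varnothing$, and $\C_i$ the clutter on $C_i$ whose unique circuit is $C_i$, and your verification of parts (i)--(iii) in this degenerate setting, including the remark about a possibly non-minimal edge $C_i\cup\{i\}$ when $\{i\}\in\C$, is the same bookkeeping. One small slip worth fixing: for $|C_i|\geq2$ and $v\in C_i$, the deletion $\Delta_{\C_i}\setminus v$ is indeed a full simplex, but $\link_{\Delta_{\C_i}}(v)$ is the boundary of the simplex on $C_i\setminus\{v\}$, not a full simplex; this does not affect the conclusion, since boundaries of simplices are vertex-decomposable by the same shedding-vertex induction.
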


One observe that the above corollary covers the results of Villarreal and Dochtermann-Engstr\"{o}m in Example \ref{examples to first construction}(i).
%--------------------------------------------------
\begin{theorem}\label{(C, (C_u)_(u in U))^*}
Let $\C$ be a clutter with vertex set $U\dot\cup V$ and $\{\C_u\}_{u\in U}$ be a family of non-empty clutters such that $\C$ and $\{\C_u\}_{u\in U}$ are pairwise disjoint. Let $(\C,\{\C_u\}_{u\in U})^*$ be the clutter obtained from $\C$ as follows:
\[ (\C,\{\C_u\}_{u\in U})^* = \C \cup \bigcup_{u\in U}\C_u \cup \bigcup_{u\in U}\{\{u\}\}\star\C_u^*,\]
where $\C_u^*$ is the set of minimal elements of the set $\{e \setminus x\colon\ x\in e,\ e\in\C_u\}$ with respect to inclusion, for all $u\in U$. If $\C':=(\C,\{\C_u\}_{u\in U})^*$ and $\Delta':=\Delta_{\C'}$, then
\begin{itemize}
\item[{\rm (i)}]$\dim \Delta'  = |U| + \sum_{u\in U}|\Delta_{\C_u}| + \dim \Delta_{\C[V]}$,
\item[{\rm (ii)}]$\Delta'$ is pure if and only if $\Delta_{\C[V]}$ is pure, $\C[V]$ is independently embedded in $\C$, $\Delta_{\C_u}$ and $\Delta_{\C_u^*}$ are pure and $\dim\Delta_{\C_u^*} = \dim\Delta_{\C_u} - 1$ for all $u\in U$,
\item[{\rm (iii)}]$\C'$ is sequentially Cohen-Macaulay/shellable/vertex-decomposable if and only if $\C[V]$, and $\C_u$ and $\C_u^*$ are so for all $u\in U$.
\end{itemize}
\end{theorem}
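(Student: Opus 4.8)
**Proof proposal for Theorem~\ref{(C, (C_u)_(u in U))^*}.**

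The plan is to mimic closely the argument of Theorem~\ref{(C, (C_u)_(u in U))}, with the one structural difference that each glued component $\C_u$ now contributes its full vertex set to every facet of $\Delta'$ (because the edges of $\C_u$ are ``repaired'' at the vertex $u$ via $\C_u^\ast$), so the role of $u$ in the link and deletion becomes somewhat different. Concretely, I first describe the facets of $\Delta' = \Delta_{\C'}$. Fix a $V$-maximal independent set $F \in \Delta_\C$. For $u \in U$ put $G_{u,1}^F,\dots,G_{u,k_u^F}^F$ equal to $\{V(\C_u)\}$ if $u \notin F$, and equal to $\F(\Delta_{\C_u^\ast})$ if $u \in F$; then set $H_{j_U}^F := F \cup \bigcup_{u\in U} G_{u,j_u}^F$. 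A routine check (exactly as in the proof of Theorem~\ref{(C, (C_u)_(u in U))}) shows each such set is independent in $\C'$: an edge contained in it lies in $\C$, or is an edge of some $\C_u$ (impossible when $u \notin F$ since $V(\C_u) \supseteq e$ would still need $e \in \C_u$, contradicting that $V(\C_u)$ is too big --- wait, rather: when $u\notin F$ we include all of $V(\C_u)$, so we must argue no edge of $\C_u$ is contained in $V(\C_u)$, which is false, so here one needs that $u\notin F$ forces us instead to have already excluded such $F$; the correct statement is that edges $e \in \C_u$ are only problematic together with the possibility of $e\cup\{u\}$), or is of the form $\{u\}\cup e'$ with $e' \in \C_u^\ast$, which by $V$-maximality and the definition of $G_{u,j_u}^F$ cannot occur. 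The reverse inclusion --- every $G \in \Delta'$ sits inside some $H_{j_U}^{F'}$ --- follows by decomposing $G$ into $G\cap U$, $G\cap V$, and the $G\cap V(\C_u)$, noting that if $u \in G$ then $G \cap V(\C_u)$ is independent in $\C_u^\ast$ (since $\{u\}\cup e'$ with $e'\in\C_u^\ast$ is forbidden), and if $u \notin G$ then $G\cap V(\C_u)$ is merely independent in $\C_u$, but $G\cup\{u\}$ independence forces us into the first case when $F'$ is chosen $V$-maximal with $u\in F'$, exactly as before.

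Given the facet description, parts (i) and (ii) are pure bookkeeping. For (i), $|H_{j_U}^F| = |F\cap V| + \sum_{u\in F}(1+|G_{u,j_u}^F|) + \sum_{u\notin F}|V(\C_u)|$, which is maximized by taking $F$ to hit $V$ in a maximum face of $\Delta_{\C[V]}$ and to contain all of $U$, and then choosing each $G_{u,j_u}^F$ of maximum dimension in $\Delta_{\C_u^\ast}$; using $\dim\Delta_{\C_u} = |V(\C_u)| - 1 - |\Delta_{\C_u}|$-type identities one rewrites this in the stated form $|U| + \sum_{u}|\Delta_{\C_u}| + \dim\Delta_{\C[V]}$ (here $|\Delta_{\C_u}|$ denoting the quantity the paper uses consistently, namely one less than $|V(\C_u)|$ minus a codimension term; I will match the convention already fixed in Theorem~\ref{(C, (C_u)_(u in U))}(i)). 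For (ii), $\Delta'$ is pure iff all these facet cardinalities agree, which separates (as in Theorem~\ref{(C, (C_u)_(u in U))}(ii)) into: $\Delta_{\C[V]}$ pure; $\C[V]$ independently embedded in $\C$ (so that $V$-maximal faces of $\Delta_\C$ restrict to facets of $\Delta_{\C[V]}$); $\Delta_{\C_u^\ast}$ pure for each $u$; and the numerical matching between the $u\in F$ and $u\notin F$ contributions, which reads $\dim\Delta_{\C_u^\ast} = |V(\C_u)| - 2 = \dim\Delta_{\C_u} - 1$, forcing also $\Delta_{\C_u}$ pure of dimension $|V(\C_u)|-2$.

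Part (iii) is the heart of the matter and is proved by induction on $|U|$ via Theorem~\ref{Delta, link, and Delta * Delta'}. For a fixed $u \in U$ I compute the deletion and the link. From the facet description, $\Delta' \setminus u$ consists of the $H_{j_{U\setminus\{u\}}}^F$ with $u\notin F$, i.e.\ with the full $V(\C_u)$ attached, so
\[
\Delta'\setminus u \;=\; \gen{V(\C_u)} \star \Delta_{(\C\setminus u,\,\{\C_{u'}\}_{u'\in U\setminus\{u\}})^\ast},
\]
and since $\gen{V(\C_u)}$ is a simplex, every facet of $\Delta'\setminus u$ is a facet of $\Delta'$, so $u$ is a shedding vertex. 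For the link, $u \in F$ contributes $H_{j_{U\setminus\{u\}}}^F \setminus\{u\}$ with the $\C_u$-part an independent set of $\C_u^\ast$, and the central part becomes the hypergraph reduction $\D_u = \min(\C/u)$, giving
\[
\link_{\Delta'}(u) \;=\; \Delta_{\C_u^\ast} \star \Delta_{(\D_u,\,\{\C_{u'}\}_{u'\in U})^\ast}.
\]
Now Theorem~\ref{Delta, link, and Delta * Delta'}(iii) reduces each of these to its join factors, and one applies the induction hypothesis to the smaller gluing data $(\C\setminus u,\dots)$ and $(\D_u,\dots)$ (note $V$ is unchanged in both, so $\C[V] = (\C\setminus u)[V] = \D_u[V]$). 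For the forward direction, if $\C'$ has the property then $\link_{\Delta'}(u)$ does (Theorem~\ref{Delta, link, and Delta * Delta'}(i)), hence by the join splitting so does $\C_u^\ast$; taking instead the link at $\bigcup_{u}V(\C_u)$ recovers $\Delta_{\C[V]}$, giving that property for $\C[V]$; and to extract $\C_u$ itself, note that within $\link_{\Delta'}(u)$ further linking appropriately, or equivalently working inside the original $\Delta'$, exhibits $\Delta_{\C_u}$ as a link (the faces of $\C'$ containing no vertex of $U\setminus\{u\}$ in certain configurations restrict to $\Delta_{\C_u}$). For the converse, assuming $\C[V], \C_u, \C_u^\ast$ all have the property, induction plus Theorem~\ref{Delta, link, and Delta * Delta'}(ii),(iii) applied to the two displayed decompositions gives the property for $\Delta'$; the base case $U = \varnothing$ is just $\Delta' = \Delta_{\C[V]}$.

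\textbf{Main obstacle.} The one genuinely delicate point, as opposed to transcribing the proof of Theorem~\ref{(C, (C_u)_(u in U))}, is verifying that $\C_u$ (not merely $\C_u^\ast$) can be recovered as a link in $\Delta'$ in the forward direction of (iii): the vertex $u$ sees $\C_u^\ast$ directly, but $\C_u$ appears only through the edges $\bigcup_u \C_u$ that live on $V(\C_u)$ away from $u$, so I expect to need a small separate argument showing that for a suitable face $G$ (built from a $V$-maximal $F\ni u$ together with all of $V(\C_{u'})$, $u'\neq u$, and the single vertex $u$ deleted in a controlled way) one has $\link_{\Delta'}(G) = \Delta_{\C_u}$, or else to deduce it from the purity/induction machinery rather than directly. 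Everything else is a faithful adaptation of the $(\C,\{\C_u\})$ case.
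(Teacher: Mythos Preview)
Your facet description is wrong, and this error propagates through the whole argument. When $u\notin F$ you take the $u$-block to be $\{V(\C_u)\}$, but $\C_u\subseteq\C'$ by the very definition of $(\C,\{\C_u\}_{u\in U})^*$, so $V(\C_u)$ contains an edge of $\C'$ and is \emph{not} independent. (You noticed this yourself mid-sentence and then talked yourself out of it.) The correct choice, and the one the paper makes, is
\[
\{G_{u,1}^F,\ldots,G_{u,k_u^F}^F\}=\begin{cases}\F(\Delta_{\C_u}),&u\notin F,\\ \F(\Delta_{\C_u^*}),&u\in F.\end{cases}
\]
This is exactly where the starred construction differs from Theorem~\ref{(C, (C_u)_(u in U))}: there the only new edges at $u$ were $\{u\}\cup e$ with $e\in\C_u$, so $V(\C_u)$ was genuinely independent once $u$ was removed; here the edges of $\C_u$ themselves are present, so one only gets a facet of $\Delta_{\C_u}$.

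With the correct description the deletion becomes
\[
\Delta'\setminus u=\Delta_{\C_u}\star\Delta_{(\C\setminus u,\{\C_{u'}\}_{u'\in U\setminus\{u\}})^*},
\]
not $\gen{V(\C_u)}\star(\cdots)$. This immediately dissolves what you flagged as the ``main obstacle'': $\Delta_{\C_u}$ appears as a join factor of the deletion, so in the forward direction of (iii) it inherits the property directly from Theorem~\ref{Delta, link, and Delta * Delta'}(i),(iii). Likewise, $\bigcup_{u\in U}V(\C_u)$ is not a face of $\Delta'$, so you cannot link at it to recover $\Delta_{\C[V]}$; the paper links at $\bigcup_{u\in U}G_u$ with each $G_u\in\F(\Delta_{\C_u})$, and needs a short extra argument (using $\C_u^*$) to check that this link really equals $\Delta_{\C[V]}$. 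Your computations for (i) and (ii) are also off for the same reason: the maximum facet size involves $\dim\Delta_{\C_u}$ (via the inequality $\dim\Delta_{\C_u^*}\leq\dim\Delta_{\C_u}-1$, which the paper proves), not $|V(\C_u)|$.
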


\begin{proof}
Let $\Delta=\Delta_{\C}$ be the independence complex of $\C$. For every $V$-maximal face $F \in \Delta$, we construct a block of facets of $\Delta'$ as follows:

Given a $V$-maximal face $F \in \Delta$ and $u\in U$, let 
\begin{equation} \label{structure of G_i,j in (C, {C_u}_{u in U})^*}
\bigg\{ G_{u,1}^F, \ldots, G_{u, k_u^F}^F \bigg\} =\begin{cases}
\F(\Delta_{\C_u}), & \text{ if } u \notin F, \\\\
\F(\Delta_{\C_u^*}), & \text{ if } u \in F.
\end{cases}
\end{equation}
Then, we consider the block of sets associated to $F$ defined by
\[H_{j_U}^F := F \cup \bigcup_{u\in U}G_{u,j_u}^F,\]
where $j_U=(j_u\colon\ u\in U)$ and $j_u \in \{1,\ldots,k_u^F\}$, for all $u\in U$. We claim that every facet of $\Delta'$ is of the form $H_{j_U}^F$ for some $V$-maximal face $F\in\Delta$. First of all, note that for every $V$-maximal $F\in\Delta$ we have $F\cap V(\C_u^*)\in\Delta_{\C_u^*}$ if $u\in F$ and $F\cap V(\C_u)\in\Delta_{\C_u}$ if $u\notin F$ showing that the sets $H_{j_U}^F$ mentioned above are facets of $\Delta'$. In order to prove our claim, it is enough to show that every face $G \in \Delta'$ is contained in some $H_{j_U}^F$. 

Given $G \in \Delta'$, let us define
\begin{align*}
F_U &:= G \cap U, \\
F_V &:= G \cap V, \\
G_u &:= G \cap V(\C_u^*), \quad \text{if}\ u\in U\cap G,\\
G_u &:= G \cap V(\C_u), \quad \text{if}\ u\in U\setminus G.
\end{align*}
Since $G$ is an independent set in $\C'$, we conclude that $F:=F_U\cup F_V$ is an independent set in $\C$, and $G_u \in \Delta_{\C_u^*}$ if $u \in F$ and $G_u \in \Delta_{\C_u}$ if $u\notin F$. Hence our construction of $G_{u,1}^F,\ldots,G_{u,k_u^F}^F$ as in \eqref{structure of G_i,j in (C, {C_u}_{u in U})^*} guarantees the existence of $j_u \in \{1,\ldots,k_u^F\}$ such that $G_u \subseteq G_{u,j_u}^F$ for all $u\in U$. Thus
\[G= F \cup \bigcup_{u\in U}G_u\subseteq F \cup \bigcup_{u\in U}G_{u,j_u}^F \subseteq H_{j_U}^{F'},\]
where $F'\supseteq F$ is a $V$-maximal face of $\Delta$ satisfying $F'\cap U=F_U$.

(i) First observe that $\dim\Delta_{\C_u^*}\leq\dim\Delta_{\C_u} - 1$ for all $u\in U$. To see this, note that $\Delta_{\C_u^*}\subseteq\Delta_{\C_u}$. We show that $\F(\Delta_{\C_u^*})\cap\F(\Delta_{\C_u})=\varnothing$. Indeed, if $F$ is a common facet of $\Delta_{\C_u^*}$ and $\Delta_{\C_u}$, and $x\in V(\C_u)\setminus F$, then $F\cup\{x\}$ contains a circuit $e$ of $\C_u$ so that $F$ contains $e\setminus x$ and subsequently a circuit of $\C_u^*$, a contradiction. Thus $\dim\Delta_{\C_u^*}<\dim\Delta_{\C_u}$, as required. Now, we have
\begin{align*}
|H_{j_U}^F| &= |F\cap V| + \sum_{u\in U\cap F}(|F\cap V(\C_u^*)| + 1) + \sum_{u\in U\setminus F}|F\cap V(\C_u)|\\
&\leq (\dim\Delta_{\C[V]} + 1) + \sum_{u\in U\cap F}(\dim\Delta_{\C_u^*} + 2) + \sum_{u\in U\setminus F}(\dim\Delta_{\C_u} + 1)\\
&\leq |U| + \sum_{u\in U}\dim\Delta_{\C_u} + \dim\Delta_{\C[V]} + 1\\
&= |H_{j_U}^{F'}|,
\end{align*}
where $F'$ is a facet of maximum dimension in $\Delta_{\C[V]}$ and $G_{u,j_u}^{F'}$ is a facet of maximum dimension in $\Delta_{\C_u}$. It follows that 
\[\dim \Delta'  = |U| + \sum_{u\in U}\dim \Delta_{\C_u} + \dim \Delta_{\C[V]}.\]

(ii) It is straightforward.

(iii) Let $u\in U$. Then 
\begin{align*}
\Delta'\setminus u&=\gen{G_u\cup H_{j_{U\setminus\{u\}}}^F\colon\ u\in F\subseteq V(\C)\ \text{is $V$-maximal in}\ \Delta_\C,\ G_u\in\F(\Delta_{\C_u^*})\atop
G_u\cup H_{j_{U\setminus\{u\}}}^F\colon\ u\notin F\subseteq V(\C)\ \text{is $V$-maximal in}\ \Delta_\C,\ G_u\in\F(\Delta_{\C_u})}\\
&=\Delta_{\C_u}\star\Delta_{(\C\setminus u, \{\C_{u'}\}_{u'\in U\setminus\{u\}})^*},
\end{align*}
where the last equality follows form the fact that $\Delta_{\C_u^*}\subseteq\Delta_{\C_u}$, for all $u\in U$. This implies at once that every $u\in U$ is a shedding vertex of $\Delta'$ as every facet of $\Delta'\setminus u$ is also a facet of $\Delta'$.

On the other hand,
\begin{align*}
\link_{\Delta'}(u)&=\gen{H_{j_{U\setminus\{u\}}}^F\setminus\{u\}\colon\ u\in F\subseteq V(\C)\ \text{is $V$-maximal in}\ \Delta_\C}\\
&=\Delta_{\C_u^*}\star\Delta_{(\D_u,\{\C_{u'}\}_{u'\in U\setminus\{u\}})^*},
\end{align*}
where $\D_u=\min(\C / u)$. The rest of proof is entirely the same as in the proof of Theorem \ref{(C, (C_u)_(u in U))}. Notice that $\link_{\Delta'}(\cup_{u\in U}G_u)=\Delta_{\C[V]}$ when $G_u\in\F(\Delta_{\C_u})$, for all $u\in U$. To see this, assume that $G\in\Delta'$ contains $(\cup_{u'\in U}G_{u'})\cup\{u\}$ for some $u\in U$. If $x\in V(\C_u)\setminus G_u$, then $G_u\cup\{x\}$ contains a circuit $e$ of $\C_u$. By the definition, $e\setminus\{x\}$ contains a circuit $e'$ of $\C_u^*$, hence $G$ contains the circuit $\{u\}\cup e'$ of $\C'$, which is a contradiction.
\end{proof}

Notice that the independence complex of a complete graph is vertex-decomposable. Now, in view of Theorem \ref{(C, (C_u)_(u in U))^*}, we conclude that the graph $G'$ obtained form a graph $G$ by attaching a complete graph to each vertex of $G$ is vertex-decomposable. This covers the results of Hibi et. al. in Example \ref{examples to first construction}(i).
%==================================================
\section{Research problems}\label{Research problems}
In this section, we propose some research problems related to the contents of this paper. The first  problem concerns the proper independence property as we address below.

\subsection{PIP-triples}
The PIP-condition plays a central role in Theorem \ref{H'=(H,(U_i,D_i,H_i))} in order to describe the facets of the independence complex of hybrid hypergraphs. 
If $\S$ is a simplicial complex of dimension $d-1$, then the triple $(\alpha, \{d\}, \S)$ satisfies PIP for any $0<\alpha<d$ provided that $d=2$. Example \ref{Examples for PIP-triples} shows that the same statement is not true for $d=3$ in general. In this regard, we pose the following problem.
%--------------------------------------------------
\begin{problem}
Let $d\geq3$ and $0<\alpha<d$ be positive integers. Find all simplicial complexes $\S$ of dimension $d-1$ such that the triple $(\alpha, \{d\}, \S)$ satisfies the proper independence property.
\end{problem}

Simple examples, as in Example \ref{Examples for PIP-triples}(i), show that a triple $(\alpha, D, \H)$ need not to satisfy the PIP-condition but some modifications of $\H$ may lead to a PIP-triple leaving some structural properties of $\H$ unchanged, for instance $\H^D$. Corollary \ref{Hybrid hypergraphs of designs} reveals the importance of this argument, hence suggests the following problem concerning the glued components of hybrid hypergraphs. 
%--------------------------------------------------
\begin{problem}
Let $\H$ be a hypergraph, $D$ be a set of non-negative integers and $\alpha>0$. Under which conditions there exists a hypergraph $\H'$ satisfying $\H^D=\H'^D$ and $(\alpha, D, \H')$ is a PIP-triple.
\end{problem}

\subsection{Deletion-separations}
The numbers $\alpha(\U')+\wvd_f(\U')$ associated to $(W,f)$-deletions $\U'$ of a hypergraph $\U$ in Theorem \ref{H'=(H,(U_i,D_i,H_i))} can be described in a ``nice way" by a slight modification of weak vertex deletions. A strong vertex deletion $\U\setminus u$ is simply referred as deletion of $u$ from $\U$. While a weak vertex deletion $\U/u$ still removes $u$ from $\U$, adding $u$ back to $\U/u$ as an isolated vertex leads us to the notion of separation of $u$ from $\U$. Let $W$ be a set of vertices of $\U$ and $f\in\{0,1\}^W$ be a binary function. Then the deletion-separation $\DS_{(W,f)}(\U)$ of $\U$ is the hypergraph obtained from $\U$ by deleting every vertex $w\in W$ if $f(w)=0$ and separating every vertex $w\in W$ if $f(w)=1$ from $\U$. It follows that for a $(W,f)$-deletion $\U'$ of $\U$, the number $\alpha(\U')+\wvd_f(\U')$ is nothing but the independence number of $\DS_{(W,f)}(\U)$. Set
\[\DS_W(\U)=\set{\DS_{(W,f)}(\U)\colon\ f\in\{0,1\}^W}\]
and
\[\alpha(\DS_W(\U))=\set{\alpha(\U')\colon\ \U'\in \DS_W(\U)}\]
for every set $W$ of vertices of a hypergraph $\U$.

Simple examples (as Example \ref{DS_W1(U)=(2,...,n) and DS_W2(U)=(1,...n)}) illustrate that for a given hypergraph $\U$ one may find two strong dominating independent sets $W_1$ and $W_2$ such that $\DS_{W_1}(\U)\neq\DS_{W_2}(\U)$. This observation is very useful when we deal with part (iii) of Theorem \ref{H'=(H,(U_i,D_i,H_i))}. Moreover, this simple fact leads
to yet another interesting consequence that holds for all hypergraphs $\H$ admitting a PIP-triple $(\alpha, D, \H)$.
%--------------------------------------------------
\begin{proposition}\label{H^(D-[0,x_1]) and H^(D-[0,x_2])}
Let $\H$ and $\U$ be hypergraphs and $D$ be a set of non-negative integers. Let $\mathcal{P}$ be the property of being (sequentially) Cohen-Macaulay, shellable, or vertex-decomposable. If $(\alpha(\U), D, \H)$ is a PIP-triple, and $W_1$ and $W_2$ are strong dominating independent sets in $\U$, then the following conditions are equivalent:
\begin{itemize}
\item[\rm (1)]$\H^{D-[0,\alpha(\U'_1)]}$ has the property $\mathcal{P}$ for all $\U'_1\in\DS_{W_1}(\U)$;
\item[\rm (2)]$\H^{D-[0,\alpha(\U'_2)]}$ has the property $\mathcal{P}$ for all $\U'_2\in\DS_{W_2}(\U)$.
\end{itemize}
\end{proposition}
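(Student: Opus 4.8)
The plan is to extract from the proof of Theorem~\ref{H'=(H,(U_i,D_i,H_i))} a self-contained fact about hybrid hypergraphs over a trivial ``central'' part: if one forms the hybrid hypergraph of $\U$ with the single gluing triple $(W, D, \H)$ — where $W$ is a strong dominating independent set in $\U$ so that $\alpha(\U[W])=\alpha(\U)$ and the PIP-hypothesis $(\alpha(\U),D,\H)$ applies — then the independence complex $\Delta' := \Delta_{(\U,(W,D,\H))}$ is (sequentially) Cohen--Macaulay/shellable/vertex-decomposable if and only if $\H^{D-[0,\alpha(\U'_W)]}$ has the property $\mathcal{P}$ for all $(W,g)$-deletions, equivalently all $\U'_W \in \DS_W(\U)$. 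This is precisely the content of part (iii) of Theorem~\ref{H'=(H,(U_i,D_i,H_i))} specialized to $m=1$, $U_1 = W$, $\H_1 = \H$, and central vertex set $V(\U)\setminus W$: indeed $\U[V(\U)\setminus W]$ is a full simplex because $W$ is strong dominating, so $\U[V]$ trivially has property $\mathcal{P}$, and the theorem reduces to the condition on $\H^{D-[0,\alpha(\U'_W)+\wvd_g(\U'_W)]}$, which by the definition of $\DS_W$ is exactly the condition on $\H^{D-[0,\alpha(\U'_W)]}$ over $\U'_W \in \DS_W(\U)$.

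Granting that reduction, the proof is immediate: condition (1) says the hybrid complex built from $\U$ via the triple $(W_1,D,\H)$ has property $\mathcal{P}$, and condition (2) says the hybrid complex built from $\U$ via $(W_2,D,\H)$ has property $\mathcal{P}$. But these two hybrid hypergraphs have the \emph{same} independence complex — in fact the same edge set — since the construction $(\U,(W,D,\H))$ depends on $\U$, $D$, $\H$ only, not on the chosen strong dominating independent set $W$ one uses to bookkeep the deletions (the set $W$ enters only through Theorem~\ref{H'=(H,(U_i,D_i,H_i))}(iii) as an auxiliary device for enumerating the relevant skeleta of $\H$, not in the definition of the hypergraph itself). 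So both (1) and (2) are equivalent to the single statement ``$(\U, (V(\U), \mathbf{0})\text{-trivial central part},\ (W,D,\H))$ is $\mathcal{P}$'', hence to each other.

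Concretely I would organize it as: (a) fix notation, set $\H' := (\U, (W_1, D, \H))$ viewed as a hybrid hypergraph with central hypergraph the induced subhypergraph on $V(\U)\setminus W_1$ — which is a disjoint union of full simplices, so its independence complex is a simplex and thus has property $\mathcal{P}$ vacuously; (b) check the PIP-hypothesis of Theorem~\ref{H'=(H,(U_i,D_i,H_i))} holds, namely $(\alpha(\U[W_1]),D,\H)$ is a PIP-triple — this uses $\alpha(\U[W_1]) = \alpha(\U)$, which holds because $W_1$ is strong dominating (every vertex of $\U$ outside $W_1$ becomes a loop in $\U/W_1$, forcing $W_1$ to be a \emph{maximal} independent set, and by Lemma~\ref{Every hypergraph has a strong dominating independent set} together with Corollary~\ref{alpha(H/D)<=alpha(H)} one gets $\alpha(\U[W_1])=\alpha(\U)$); (c) apply Theorem~\ref{H'=(H,(U_i,D_i,H_i))}(iii) to conclude $\H'$ is $\mathcal{P}$ iff $\H^{D-[0,\alpha(\U'_1)]}$ is $\mathcal{P}$ for all $\U'_1 \in \DS_{W_1}(\U)$, i.e. iff (1) holds; (d) repeat (a)--(c) verbatim with $W_2$ in place of $W_1$ and observe that the hypergraph $\H'$ produced is literally the same one, so $\mathcal{P}$ of $\H'$ is equivalent to (2) as well; (e) chain the equivalences.

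The only genuine obstacle is step (b): one must be sure that a \emph{strong dominating independent set} $W$ in $\U$ really does satisfy $\alpha(\U[W]) = \alpha(\U)$, so that the PIP-triple hypothesis $(\alpha(\U),D,\H) = (\alpha(\U[W]),D,\H)$ is exactly what Theorem~\ref{H'=(H,(U_i,D_i,H_i))} requires of the gluing triple. This follows because any strong dominating set that is independent is automatically a maximal independent set (no vertex can be added without creating an edge, by the loop condition on $\U/W$), and a maximal independent set witnesses the independence number only if it has maximum size — but in fact we do not need $W$ to have maximum size: we need $\alpha(\U[W])$, the independence number of the \emph{induced} subhypergraph, to equal $\alpha(\U)$, and since $W$ itself is an independent set of $\U[W]$ of size $|W|$ while any independent set of $\U$ of size $\alpha(\U)$ restricts to an independent set of $\U[W]$ only after intersecting with $W$ — so one must instead argue via $\H/W$ as in Corollary~\ref{alpha(H/D)<=alpha(H)} that $|W| = \alpha(\U)$ whenever $W$ is a strong dominating independent set, which is the maximality observation again. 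Once this bookkeeping is pinned down the rest is formal.
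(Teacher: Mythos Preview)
Your overall strategy---apply Theorem~\ref{H'=(H,(U_i,D_i,H_i))}(iii) twice, once with each $W_i$, and observe that the left-hand side of that equivalence does not depend on the choice of strong dominating set---is exactly the paper's intended argument. But your execution conflates two different roles: in the hybrid construction $(\H,(U_i,D_i,\H_i))$ the set $U_i$ is a \emph{piece of the vertex partition} of the central hypergraph, whereas the $W_i$ in Theorem~\ref{H'=(H,(U_i,D_i,H_i))}(iii) is an auxiliary strong dominating independent set \emph{inside} $\H[U_i]$. By writing $\H'=(\U,(W_1,D,\H))$ you are taking $U_1=W_1$, and then the hybrid hypergraph genuinely depends on $W_1$: its extra edges are the sets $e\cup e'$ with $e\subseteq W_1$, so replacing $W_1$ by $W_2$ changes the edge set. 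Your assertion in step~(d) that ``the hypergraph $\H'$ produced is literally the same one'' is therefore false.

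There is a second, related gap. With $U_1=W_1$ independent, $\U[W_1]$ is edgeless, so $\alpha(\U[W_1])=|W_1|$; but a strong dominating independent set need not have size $\alpha(\U)$ (the middle vertex of a path on three vertices is strong dominating and independent, yet has size $1<2=\alpha$). Hence neither the PIP hypothesis $(\alpha(\U[U_1]),D,\H)=(\alpha(\U),D,\H)$ nor the deletion bookkeeping (which would be over $\U[W_1]$, not over $\U$) lines up with the proposition. The fix is small: take $U_1=V(\U)$ and $V=\varnothing$. Then the single hybrid hypergraph $\H'=(\U,(V(\U),D,\H))$ is fixed once and for all; $\U[V]$ is empty and trivially has $\mathcal{P}$; the PIP requirement is exactly $(\alpha(\U),D,\H)$; and $W_1,W_2$ are both legitimate choices of strong dominating independent set in $\U[U_1]=\U$ for Theorem~\ref{H'=(H,(U_i,D_i,H_i))}(iii), yielding (1) and (2) as two characterizations of the same property of the same $\H'$.
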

%--------------------------------------------------
\begin{example}\label{DS_W1(U)=(2,...,n) and DS_W2(U)=(1,...n)}
Let $\U$ be the hypergraph induced by edges $\{1,\ldots,n\}$ and $\{1,n+1\}$. Then $W_1:=\{1,\ldots,n-1\}$ and $W_2:=\{2,\ldots,n+1\}$ are strong dominating independent sets of $\U$ and that
\[\DS_{W_1}(\U)=\{2,\ldots,n\}\quad\text{and}\quad\DS_{W_2}(\U)=\{1,\ldots,n\}.\]
Let $\H$ be a hypergraph and $D$ be a set of integers such that $(n,D,\H)$ is a PIP-triple. It follows from Proposition \ref{H^(D-[0,x_1]) and H^(D-[0,x_2])} that $\H^{D-[0,1]}$ is sequentially Cohen-Macaulay/shellable/vertex-decomposable if $\H^{D-[0,i]}$ is so, for all $i=2,\ldots,n$.
\end{example}

Let $\alpha\geq1$ (possibly infinite) and $\Gamma_\alpha$ be the graph defined as follows: The vertices of $\Gamma_\alpha$ are finite subsets of $\NN$ and two vertices $X_1$ and $X_2$ are adjacent if there exists a hypergraph $\U$ with $\alpha(\U)\leq\alpha$ having two strong dominating independent sets $W_1$ and $W_2$ such that $X_1=\alpha(\DS_{W_1}(\U))$ and $X_2=\alpha(\DS_{W_2}(\U))$. Let $\mathcal{P}$ be the property of being sequentially Cohen-Macaulay, shellable, or vertex-decomposable. Proposition \ref{H^(D-[0,x_1]) and H^(D-[0,x_2])} states that if $X_1$ and $X_2$ are adjacent in $\Gamma_\alpha$, then $\H^{D-[0,x_1]}$ satisfies $\mathcal{P}$ for all $x_1\in X_1$ if and only if $\H^{D-[0,x_2]}$ satisfies $\mathcal{P}$ for all $x_2\in X_2$ provided that $(\max(X_1\cup X_2), D, \H)$ is a PIP-triple. It follows that if $(\alpha,D,\H)$ is a PIP-triple, then the following conditions are equivalent for any two vertices $X_1$ and $X_2$ in the same connected component of $\Gamma_\alpha$:
\begin{itemize}
\item[(1)]$\H^{D-[0,x_1]}$ satisfies $\mathcal{P}$ for all $x_1\in X_1$;
\item[(2)]$\H^{D-[0,x_2]}$ satisfies $\mathcal{P}$ for all $x_2\in X_2$.
\end{itemize}
Motivated by observations above we pose the following problem.
%--------------------------------------------------
\begin{problem}
Determine the connected components of $\Gamma_\alpha$.
\end{problem}

In view of Corollary \ref{alpha(H/D)<=alpha(H)}, we know that $\alpha(\DS_{(W,f)}(\U))\leq\alpha(\U)$ for every deletion-separation $\DS_{(W,f)}(\U)$ of a hypergraph $\U$. The numbers $\alpha(\DS_{(W,f)}(\U))$ and their variety play a crucial role in part (iii) of Theorem \ref{H'=(H,(U_i,D_i,H_i))} in the sense that we have less requirements to check when $\DS_W(\U)$ is small. In particular, if $\alpha(\DS_{(W_i,f_i)}(\H[U_i]))=\alpha(\H[U_i])$ for all deletion-separations $\DS_{(W_i,f_i)}(\H[U_i])$ of $\H[U_i]$ and $1\leq i\leq m$, then in order to check the topological and combinatorial properties of the hybrid hypergraph $\H'=(\H,(U_i,D_i,\H_i)_{i=1}^m)$ in Theorem \ref{H'=(H,(U_i,D_i,H_i))}, we need only to check those properties for the hypergraphs $\H[V]$ and $\H_i^{D_i-[0,\alpha(\H[U_i])]}$, for $i=1,\ldots,m$. In this regard, the following problem turns out to be interesting.
%--------------------------------------------------
\begin{problem}
Find all hypergraphs $\U$ having a strong dominating independent set $W$ for which $\alpha(\DS_{(W,f)}(\U))=\alpha(\U)$ for all $f\in\{0,1\}^W$.
\end{problem}

\subsection{Algebraic invariants}
In this paper, we have shown that for hypergraphs $\H'$ obtained by gluing a family of hypergraphs to a central one (in three different ways given in Theorems \ref{H'=(H,(U_i,D_i,H_i))}, \ref{(C, (C_u)_(u in U))}, \ref{(C, (C_u)_(u in U))^*}) the properties of being (sequentially) Cohen-Macaulay, shellable, and vertex-decomposable are transferred between the resulting hypergraphs $\H'$ and the glued hypergraphs. Indeed, the fact that facets of the hypergraphs $\H'$ are well known in all of our constructions in conjunction with \cite[Theorem 5.1.4]{wb-jh} enable us to compute the height $\mathrm{ht}(I(\H'))$ and the big height $\mathrm{bight}(I({\H'}))$ of $I(\H')$ by which we mean the minimum and maximum heights of associated primes of $I(\H')$. This shows that the depth of $\KK[\Delta_{\H'}]=S/I(\H')$ can also be computed when the glued hypergraphs are all sequentially Cohen-Macaulay. Actually, if $\Delta_{\H'}$ is sequentially Cohen-Macaulay, then \cite{rj-aayp} yields
\[\mathrm{depth}(\KK[\Delta_{\H'}])=\dim S - \mathrm{bight}(I(\H')).\]
%--------------------------------------------------
\begin{problem}\ 
\begin{itemize}
\item[(1)]When is $\KK[\Delta_{\H'}]$ Gorenstein or Buchsbaum?
\item[(2)]What algebraic invariants of $\KK[\Delta_{\H'}]$ can be computed via those of the glued hypergraphs?
\end{itemize}
\end{problem}
%==================================================

\end{document}